\journalname{}
 \newtheorem{assumption}{Assumption}
\begin{document}
\title{Distributionally robust chance constrained Markov decision process with Kullback-Leibler divergence %\footnote{} 
}
 % \author{Tian Xia$^a$, Jia Liu$^a$, Abdel Lisser$^b$\\
 % \small $a$: School of Mathematics and Statistics, Xi'an Jiaotong University, 710049, Xi'an, P. R. China\\\small 
 % $b$: CentraleSupelec, Laboratoire des Signaux et des Systemes, 91190 Gif-sur-Yvette, France}

\date{}

\author{Tian Xia, Jia Liu, Abdel Lisser }

\institute{Tian Xia \at
             School of Mathematics and Statistics, Xi'an Jiaotong University,
              Xi'an, 710049, P. R. China\\
            xt990221@stu.xjtu.edu.cn
           \and
              Jia Liu,  Corresponding author  \at
              School of Mathematics and Statistics, Xi'an Jiaotong University,
              Xi'an, 710049, P. R. China\\
              jialiu@xjtu.edu.cn
           \and
              Abdel Lisser \at
              Laboratoire des Signaux et des Systemes,  CentraleSupelec, 
              Gif-sur-Yvette, 91190, France\\
              abdel.lisser@centralesupelec.fr
           % This research was supported by National Key R\&D Program of China under No. 2022YFA1004000 and National Natural Science Foundation of China under Grant Number 11991023 and 11901449.
}

\maketitle

\begin{abstract}
This paper considers the distributionally robust chance constrained Markov decision process with random reward and ambiguous reward distribution. We consider individual and joint chance constraint cases with Kullback-Leibler divergence based ambiguity sets centered at elliptical distributions or elliptical mixture distributions, respectively. We derive tractable reformulations of the distributionally robust individual chance constrained Markov decision process problems and design a new hybrid algorithm based on the sequential convex approximation and line search method for the joint case. We carry out numerical tests with a machine replacement problem.
%validate the convergence of the distributionally robust optimization problem when the radius
%on the K-L divergence distance based DRCCMDP (KL-DRCCMDP). 

%Keywords: Markov decision process, Chance constraint, Distributionally robust optimization, Kullback-Leibler divergence, Elliptical distribution.
\end{abstract}
\keywords{Markov decision process \and Chance constraint \and Distributionally robust optimization \and Kullback-Leibler divergence \and Elliptical distribution}
\subclass{90C15 \and  90C40 \and 90C99}

%\maketitle

\section{Introduction}
%\subsection{Introduction to DRCCMDP on ....?}
%\subsection{Background: CC-MDP}
%2-3 sen
%Sen 1. what is MDP ???

%Sen 2. why study MDP? MDP plays an important role in ... 

%Sen 3. MDP is the fundamental math tool for reinforcement learning.

%Markov decision processes (MDPs) are an effective mathematical model to study the long term performance of uncertain dynamical environments and are usually applied to solve the policy optimization problems. 
Markov decision process (MDP) is an effective mathematical model to find an optimal dynamic policy in a long-term uncertain environment, which is the  fundamental mathematical tool of reinforcement learning \cite{sutton1999between}.
%In practical applications,
%real environments, 
It has many important applications in 
%playes important role in there are a great number of actual scenarios needed to analyze their operation processes to come up with a optimal policy, such as 
healthcare \cite{goyal2022robust}, autonomous driving \cite{wei2011point}, financial markets \cite{chakraborty2019capturing}, inventory control \cite{klabjan2013robust}, game theory \cite{yu2022zero} and so on. 
%It is worth noting that 
%MDP is the mathematical fundamental tool of reinforcement learning, which plays an important role in searching a good policy by interacting with the environment.

%via trail and error.

%with the development of  (RL), the role that MDPs are used to work out the optimal policy in RL concerned about searching a good policy by interacting with the environment via trail and error makes the study on MDPs get more and more attention.

%\subsection{Literature review}
%\subsubsection{randomness in MDP}
%Sen 1. Randomness comes from two perspectives: reward / transaction prob.

%Sen 2. Many criterion have been considered in literature to measure the randomness of the reward, including mean(?), variance (xia li), semi-variance (), value-at-risk (delage wisseman, daniel kuhn), conditional value-at-risk ()

%Sen 3. transaction prob. 

%In practical applications, the environment might be random to the decision-maker, thus it is 
%real scenarios, randomness are necessary to be considered, which motivates us to consider random MDPs. 
The randomness of MDP often comes from two perspectives: reward and transition probabilities. Risk attitude is an important issue when the decision-maker measures the randomness of the reward. Many risk criteria have been considered in risk-aversion MDP, for instance, mean and variance \cite{xia2020risk}, semi-variance \cite{yu2022zero}, Value-at-Risk \cite{ma2019state}, Conditional Value-at-Risk \cite{prashanth2014policy}. Depending on the randomness of transition probabilities, MDP problems can be classified into two groups: rectangular MDP \cite{ramani2022robust,satia1973markovian,wiesemann2013robust} and nonrectangular MDP \cite{mannor2016robust,wang2022reliable}.

%\subsubsection{CC MDP}
%Sen 1. contrained MDP. safe constraints. (Vikas, ORL.)

%Sen 2.(may merge with sen 1), Chance constrained MDP (CC-MDP).

%Sen 2.1. traditional CC can control the extreme loss with a probability, which has been widely applied in shape optimization, game theory , electric market (unit committment).....([XT-L-A, ORL, LJ16-ORL,Herion-Vim-Ackooj,Mehrotra, Cheng Jianqiang/Pan Kai, Xie Weijun.])

%Sen 3. (option) XXX first study CC-MDP...

%Sen 4. Delage. consider ....(maybe in two sen)

%Sen 5. Vikas, ANOR. 

%Sen 6 ??? more-- CC-MDP

In many real applications of MDP, for instance, 
autonomous driving or healthcare, the safety requirements play an important role when making a dynamic decision to avoid extreme behaviour out of control \cite{kiran2021deep}, 
%With the development of autonomous driving, the safety requirements of driving process have been got more and more attention for the reason that even one behaviour out of control is unacceptable in practice \cite{kiran2021deep}. 
which take into account robust  constraints in the MDP problem, for instance the constrained MDP (CMDP) \cite{varagapriya2022constrained}. 
%the robust  we denote it as constrained MDP (CMDP) \cite{altman1999constrained}.
%under practical requirements, and the framework of constrained MDP (CMDP) 
%\cite{altman1999constrained} is considered. 
To address the extreme conservation of the robust constraints, 
%we can apply 
%the techniques of  
%optimization. 
%Traditional 
chance constraints 
%optimization 
control the extreme loss 
in %with 
a probability, which has been widely applied in shape optimization, game theory, electricity market and many other fields \cite{dvorkin2019chance,jiang2022data,kuccukyavuz2022chance,liu2022distributionally,peng2021games}. 
Delage and Mannor \cite{delage2010percentile} study a reformulation of %joint 
chance constrained MDP (CCMDP) with random rewards or transition probabilities.
Varagapriya et al. \cite{varagapriya2022joint} apply joint chance constraints into constrained MDP  and find reformulations when the rewards follow an elliptical distribution.

%After applying chance constraints optimization into CMDP, the structure of chance constrained MDP (CCMDP) is derived and it has been studied by .

%\subsubsection{DRO-CC MDP}

%Sen 1. why DRO. face ambiguity of the distribution: imperfectness/incompleteness of the information in distribution, estimation error.

%Sen 2, DRO, make a robust decision with respect to the worst-case distribution in a pre-set ambiguity set.

%Sen 3. DRO: ambiguity set: moments, distances based.

%Sen 4. Moments DRO-CC. ([El Gauli,03, Daniel Kuhn, Hanasusanto,1x, Liu-MOR,  ])

%Sen 5. Distance DRO-CC: K-L (Jiang Ruiwei, 16, Jeff-Hu zhaolin, 13), Wasserstain (Xie weijun, Chen zhi, Ji Ran, Cheng jianqiang/pankai)

%Sen 6. DRO-CC MDP can deal with distribution ambiguity in a dynamic decision procedure.

%Sen 7. Section Delage first consider ambiguity .... moments.

%Sen 8, Vikas-AL, working paper.

%Sen 9, Shorfall in DRO-CC MDP: no results for joint; limited to normal distributed center case; can not deal multi-modal.

In some applications of CCMDP, 
%As for CCMDP, 
the distribution of %the 
random parameters is 
%often 
not perfectly known, 
due to the estimation error or imperfect a-priori knowledge.
%therefore we may encounter the imperfectness/incompleteness of the information in distribution and it will arise estimation error. 
To address %this handle 
this problem, we can employ the distributionally robust optimization (DRO) approach \cite{hanasusanto2015distributionally}, where the decision-maker makes a robust decision with respect to the worst-case distribution in a pre-set ambiguity set. In
%Among 
DRO literature, there are two 
major 
types of ambiguity sets: the moments-based and the distance-based. In moments-based DRO \cite{delage2010distributionally,wiesemann2014distributionally}, decision-maker knows some moments information about 
%the moments 
of random parameters. In distance-based DRO, the decision-maker %knows
has a reference distribution and consider a ball centered at it with respect to a probability distance,
given that she/he %believes
believes that the true distribution of random parameters is close to the reference distribution.
%which can be obtained in prior, and the ambiguity set is composed of all distributions that are close to the reference one. 
Depending on the probability distance we choose, there are $\phi$-divergence (including Kullback-Leibler (K-L) divergence as an important case) distance based DRO \cite{hu2013kullback,jiang2016data} and Wasserstein distance based DRO \cite{chen2022data,gao2022distributionally,ji2021data,xie2021distributionally}. Applying the techniques of DRO into CCMDP, we have the 
%get 
distributionally robust chance constrained MDP (DRCCMDP) problem. 
Nguyen 
%Abdel 
et al. \cite{nguyen2022distributionally} studied individual DRCCMDP 
%, they consider the case of 
with moments-based, $\phi$-divergence based and Wasserstein distance based 
ambiguity sets.
%DRCCMDP. 
However, the study of DRCCMDP is far from completeness. There are still many important problems for research, for instance, the joint chance constraint in DRCCMDP has not been studied, the high-kurtosis, fat-tailedness or %multi-modal 
multimodality of the reference distribution in distance-based DRCCMDP are not considered.

%and multimodal  has not been studied. 

%they did not study the joint case of DRCCMDP, for the specified K-L divergence case, they did not propose its direct reformulation.

%\subsection{Contributions of this paper}
%\subsubsection{motivation}
%1-2 sen

In this paper, we study
%focus on 
the K-L divergence distance based DRCCMDP (KL-DRCCMDP) when the transition probabilities are known and the reward vector is random {,which is the same setting as in \cite{nguyen2022distributionally,varagapriya2022constrained,varagapriya2022joint}. Like \cite{nguyen2022distributionally}, we assume that we only know partial information about the distribution of random reward and apply the distributionally robust optimization approach.
%DRO approach into our model. 
%Note that when using chance constraints, our considered MDP model is 
Unlike \cite{delage2010percentile}, % which study only one random reward vector in the optimization problem, 
we consider random reward in both the %$bring together 
objective function and the constraints in order to 
%and constrained reward vectors into our model which may 
model more safe scenes in real life. Compared with %Although 
\cite{nguyen2022distributionally} {where the authors consider only} the individual case ,
%has studied the DRCCMDP problem, they did not consider the broader case of chance constraints, i.e., 
we study both individual and joint chance constraint cases in the MDP settings in order to characterize the overall satisfaction of safe constraints %. 
%In this work, we fulfill this gap and furtherly extend the theory in constrained MDP.} 
%We consider both individual and joint cases of KL-DRCCMDP 
with K-L divergence distance based ambiguity set centered at an elliptical reference distribution.} 
We derive reformulations of the related optimization problems in these two cases. For the individual case, the reformulation is convex. However for the joint case, the reformulation is not convex. We design a new hybrid algorithm based on the sequential convex approximation and line search method to solve this nonconvex problem. In the last part of the joint case of KL-DRCCMDP, we study the case where the ambiguity set is centered at an elliptical mixture distribution and derive a new reformulation. Finally we conduct numerical experiments on a machine replacement problem to test our models and algorithms. The major contributions of this paper are listed below.
%\subsubsection{major approach}
%1 sentence for each major approach
%1, How to deal with joint: reformulation, design a algorithm
%2, elliptical, we can reformulation based on different charaterize function in the distribtion group/family.
%3, centered at a mixture distribution, then find K-L ambiguity set.
%\subsubsection{contribution}
\begin{itemize}
    \item As far as we know, 
    this is the first work studying 
    %we are the first one to study 
    the 
    joint case of DRCCMDP.
    \item We consider an
    %are the first one who study the KL-DRCCMDP centered at the 
    elliptical reference distribution and an elliptical mixture reference distribution as the center of the ambiguity sets, which can reflect the high-kurtosis, fat-tailedness or 
multimodality %property 
of the a-priori information.
    \item We propose a new hybrid algorithm based on a sequential convex approximation and line search method to solve
    %handle 
    the nonconvex reformulation. 
    %By 
    Numerical results
    %experiment, we 
    validate the practicability of this algorithm.
    %here.
\end{itemize}
In Section \ref{j2}, we introduce a series of fundamental models of MDP as the background of DRCCMDP. In Section \ref{j3}, we study three kinds of KL-divergence based DRCCMDP: the individual KL-DRCCMDP with elliptical reference distributions, the joint KL-DRCCMDP with elliptical reference distributions and the joint KL-DRCCMDP with elliptical mixture reference distributions. In Section \ref{j4}, we carry out numerical experiments on a machine replacement problem. In the last section, we give the conclusion.

\section{Background of DRCCMDP}\label{j2}
\subsection{MDP}
We consider an infinite horizon discrete time Markov decision process (MDP) problem, which can be represented as a tuple $(\mathcal{S}, \mathcal{A}, P, r_0, q, \beta),$ where:
\begin{itemize}
\item[$\bullet$]$\mathcal{S}$ is a finite state space with $|S|$ states whose generic element is denoted by $s$.
\item[$\bullet$]$\mathcal{A}$ is a finite action space with $|\mathcal{A}|$ actions and $a\in\mathcal{A}(s)$ denotes the action $a$ at state $s.$
\item[$\bullet$]$P\in\mathbb{R}^{|\mathcal{S}|\times|\mathcal{A}|\times|\mathcal{S}|}$ is the distribution of the transition probability $p(\overline{s}|s,a),$ which denotes the probability of moving from state $s$ to $\overline{s}$ when the action $a\in\mathcal{A}(s)$ is taken.
\item[$\bullet$]$r_0(s,a)_{s\in\mathcal{S}, a\in\mathcal{A}(s)}:\mathcal{S}\times\mathcal{A}\rightarrow\mathbb{R}$ denotes a running reward, which is the reward at the state $s$ when the action $a$ is taken. $r_0=(r_0(s,a))_{s\in\mathcal{S}, a\in\mathcal{A}(s)}\in\mathbb{R}^{|\mathcal{S}|\times|\mathcal{A}|}$ is the running reward vector.
\item[$\bullet$]$q=(q(s))_{s\in\mathcal{S}}$ represents the probability of the initial state.
\item[$\bullet$]$\beta\in[0,1)$ is the discount factor.
\end{itemize}
 In an MDP, the agent aims at maximizing her/his value function with respect to the whole trajectory by choosing an optimal policy. By \cite{sutton1999policy}, it is worth noting that there are two ways of formulating the agent's objective. One is the average reward formulation, the other is considering a discounting factor $\beta\in[0,1).$ As we care more about the long-term reward obtained from the MDP, we pay more attention on optimizing current rewards over future rewards. Therefore, we follow the latter formulation to consider discounting value function in this paper.

For a discrete time controlled Markov chain $(s_t,a_t)_{t=0}^{\infty}$ defined on the state space $\mathcal{S}$ and action space $\mathcal{A}$, where $s_t$ and $a_t$ are the state and action at time $t$ respectively. At the initial time $t=0,$ the state is $s_{0}\in\mathcal{S},$ and the action $a_0\in\mathcal{A}(s_0)$ is taken according to the initial state's probability $q.$ Then the agent gains rewards $r_0(s_0,a_0)$ based on the current state and action. When $t=1,$ the state moves to $s_1$ with the transition probability $p(s_1|s_0,a_0).$ The dynamics of the MDP repeat at state $s_1$ and continue in the following infinite time horizon. As a result, we are able to get the value function for the whole process.

We assume that running rewards $r$ and transition probabilities $p$ are stationary, i.e. they only depend on states and actions rather than on time. We define the policy $\pi=(\mu(a|s))_{s\in\mathcal{S},a\in\mathcal{A}(s)}\in\mathbb{R}^{|\mathcal{S}|\times|\mathcal{A}|}$ where $\mu(a|s)$ denotes the probability that the action $a$ is taken at state $s$, and $\xi_t=\{s_0,a_0,s_1,a_1,...,s_{t-1}$, $a_{t-1},s_t\}$ the whole historical trajectory at time $t.$ 
%Let $\Theta_t$ be the set of all possible trajectories of length $t.$ 
For different time $t,$ sometimes the decisions made by the agent may vary accordingly, thus the chosen policy may vary depending on time. We call this kind of policy the history dependent policy denoted as $\pi_h=(\mu_{t}(a|s))_{s\in\mathcal{S},a\in\mathcal{A}(s)}, t=1,2,...,\infty.$ When the policy is independent of time, we call it a stationary policy. That is, there exists a vector $\overline{\pi}$ such that $\pi_h=(\mu_{t}(a|s))_{s\in\mathcal{S},a\in\mathcal{A}(s)}=\overline{\pi}=(\overline{\mu}(a|s))_{s\in\mathcal{S},a\in\mathcal{A}(s)}$ for all $t.$ Let $\Pi_h$ and $\Pi_s$ be the sets of all possible history dependent policies and stationary policies respectively. When the reward $r_0(s,a)$ is random, for a fixed $\pi_h\in\Pi_h,$ we consider the discounted expected value function 
\begin{equation}\label{00}
    V_{\beta}(q,\pi_h)=\sum_{t=0}^{\infty}\beta^{t}\mathbb{E}_{q,\pi_h}(r_0(s_t,a_t)),
\end{equation}
where $\beta\in [0,1)$ is the fixed discount factor. The objective of the agent is to maximize the discounted expected value function
\begin{equation}\label{optim MDP}
    \max_{\pi_{h}\in\Pi_h}{\sum_{t=0}^{\infty}\beta^{t}\mathbb{E}_{q,\pi_h}(r_0(s_t,a_t))}.
\end{equation}

We denote by $d_{\beta}(q,\pi_h)$ the $\beta$-discounted occupation measure such that 
$$
\begin{aligned}
d_{\beta}(q,\pi_h,s,a) =(1-\beta)\sum_{t=0}^{\infty}\beta^{t}p_{q,\pi_{h}}(s_{t}=s, a_{t}=a), \forall{s\in\mathcal{S},a\in\mathcal{A}(s)}.
\end{aligned}
$$
%\begin{subequations}
%\begin{eqnarray}
%& d_{\beta}(q,\pi_h,s,a) & =(1-\beta)\sum_{t=0}^{\infty}\beta^{t}q(s)p(s|s,a)\mu_t(a|s) \\
%&& =(1-\beta)\sum_{t=0}^{\infty}\beta^{t}p_{q,\pi_{h}}(s_{t}=s, a_{t}=a), \forall{s\in\mathcal{S},a\in\mathcal{A}(s)}.
%\end{eqnarray}
%\end{subequations} 
As the state and action spaces are both finite, by Theorem 3.1 in \cite{altman1999constrained}, the occupation measure $d_{\beta}(q,\pi_h,s,a)$ is a well-defined probability distribution. By taking the occupation measure in \eqref{00}, the discounted expected value function can be written as
$$
\begin{aligned}
V_{\beta}(q,\pi_h) & =\sum_{(s,a)\in\Lambda}\sum_{t=0}^{\infty}\beta^{t}p_{q,\pi_h}(s_{t}=s,a_{t}=a) r_{0}(s,a) \\
& =\frac{1}{1-\beta}\sum_{(s,a)\in\Lambda\label{value}}d_{\beta}(q,\pi_h,s,a) r_0(s,a),
\end{aligned} 
$$ where we define $\Lambda=\left\{(s,a)|s\in\mathcal{S}, a\in\mathcal{A}(s)\right\}$.
%\begin{align}
%V_{\beta}(q,\pi_h)&=\sum_{(s,a)\in\Lambda}\sum_{t=0}^{\infty}\beta^{t}p_{q,\pi_h}(s_{t}=s,a_{t}=a) r_{0}(s,a)\\
%&=\frac{1}{1-\beta}\sum_{(s,a)\in\Lambda\label{value}}d_{\beta}(q,\pi_h,s,a) r_0(s,a),
%\end{align} 

By Theorem 3.2 in \cite{altman1999constrained}, we know that the set of occupation measures corresponding to history dependent policies is the same as that concerning stationary policies. Furthermore, from \cite{altman1999constrained,varagapriya2022constrained} we have: 
\begin{lemma}[\cite{altman1999constrained,varagapriya2022constrained}]\label{th1}
    The set of occupation measures corresponding to history dependent policies is equal to the set
\begin{equation}\label{le}
    \Delta_{\beta,q}=\left\{\tau \in \mathbb{R}^{|\mathcal{S}|\times|\mathcal{A}|}\Bigg|
    \begin{array}{l}
\sum\limits_{(s,a)\in\Lambda}\tau(s,a)\left(\delta(s',s)-\alpha p(s'|s,a)\right)=(1-\beta)q(s'), \\
\tau(s,a)\ge0, \forall s',s\in\mathcal{S},a\in\mathcal{A}(s).
\end{array}\right\},
\end{equation} where $\delta(s',s)$ is the Kronecker delta, such that the expected discounted value function defined by \eqref{value} remains invariant to time. 
\end{lemma}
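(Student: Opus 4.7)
The plan is to prove the two inclusions of the set equality: first, that every occupation measure $d_{\beta}(q,\pi_h,\cdot,\cdot)$ arising from a history dependent policy lies in $\Delta_{\beta,q}$, and second, that for every $\tau\in\Delta_{\beta,q}$ there exists a (stationary) policy whose occupation measure equals $\tau$. The equivalence between history dependent and stationary occupation measures (Theorem 3.2 in \cite{altman1999constrained}) will be invoked for the second direction, so the core of the work is the balance equation verification and the policy recovery.

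For the forward inclusion, I would start from the definition
$$d_{\beta}(q,\pi_h,s,a) = (1-\beta)\sum_{t=0}^{\infty}\beta^{t}p_{q,\pi_{h}}(s_{t}=s, a_{t}=a),$$
and for a fixed $s'\in\mathcal{S}$ compute $\sum_{a\in\mathcal{A}(s')}d_{\beta}(q,\pi_h,s',a)$ by splitting the sum over $t$ into the $t=0$ term (which yields $(1-\beta)q(s')$) and the $t\ge 1$ terms. Using the Chapman--Kolmogorov identity $p_{q,\pi_h}(s_t=s')=\sum_{(s,a)\in\Lambda}p_{q,\pi_h}(s_{t-1}=s,a_{t-1}=a)p(s'|s,a)$ and reindexing the infinite series, the $t\ge 1$ contribution becomes $\beta\sum_{(s,a)\in\Lambda}d_{\beta}(q,\pi_h,s,a)p(s'|s,a)$. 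Rearranging the resulting identity gives exactly the linear equality in \eqref{le} (with $\alpha$ to be read as $\beta$), and non-negativity of the probability measure ensures $\tau(s,a)\ge 0$.

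For the reverse inclusion, given $\tau\in\Delta_{\beta,q}$, I would define a stationary policy by $\bar{\mu}(a|s)=\tau(s,a)/\sum_{a'\in\mathcal{A}(s)}\tau(s,a')$ whenever the denominator is positive, and arbitrarily otherwise (this construction is the standard one in \cite{altman1999constrained}). I would then verify that the occupation measure $d_{\beta}(q,\bar{\pi})$ satisfies the same balance equations as $\tau$, so that both solve the linear system
$$\sum_{(s,a)\in\Lambda}x(s,a)\bigl(\delta(s',s)-\beta p(s'|s,a)\bigr)=(1-\beta)q(s'),\quad\forall s'\in\mathcal{S},$$
together with $x(s,a)=\bar{\mu}(a|s)\sum_{a'}x(s,a')$. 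The marginal summation $\sum_a x(s,a)$ satisfies a system whose coefficient matrix $I-\beta P^{\bar{\pi}}$ is invertible because $\beta<1$ and $P^{\bar{\pi}}$ is stochastic, which forces the marginals to coincide, and then the factorization through $\bar{\mu}$ forces $\tau=d_{\beta}(q,\bar{\pi})$.

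The main obstacle is the reverse direction, specifically the uniqueness step: one must take care on the set of states where $\sum_{a}\tau(s,a)=0$, since the policy $\bar{\mu}(\cdot|s)$ is defined arbitrarily there and one must argue that those states are visited with zero probability under $\bar{\pi}$ started from $q$ (so the ambiguity does not affect the occupation measure). This is handled by showing those states are transient under the induced chain, exploiting the discount factor $\beta<1$ to invert $I-\beta P^{\bar{\pi}}$ and conclude that the nonnegative solution of the balance system is uniquely determined by the right-hand side $(1-\beta)q$. Once uniqueness is in hand, both inclusions combine to give $\Delta_{\beta,q}$ as the set of occupation measures, and the invariance to time of \eqref{value} is then immediate from the time-independent formulation.
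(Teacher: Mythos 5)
The paper does not supply its own proof of this lemma --- it is imported verbatim from Theorems 3.1--3.3 of \cite{altman1999constrained} (and \cite{varagapriya2022constrained}) --- and your argument is a correct reconstruction of exactly that standard proof: the Chapman--Kolmogorov reindexing for the forward inclusion, the policy recovery $\bar{\mu}(a|s)=\tau(s,a)/\sum_{a'}\tau(s,a')$ plus invertibility of $I-\beta P^{\bar{\pi}}$ for the reverse inclusion, with the right care taken at states of zero marginal. You also correctly identified that the $\alpha$ appearing in \eqref{le} is a typographical slip for the discount factor $\beta$; nothing further is needed.
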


Therefore the MDP problem \eqref{optim MDP} with history dependent policies can be equivalently written as a stationary MDP problem:
\begin{subequations}\label{9pp}
\begin{eqnarray}
& \max\limits_{\tau} & \frac{1}{1-\beta}\sum\limits_{(s,a)\in\Lambda}\tau(s,a)r_0(s,a)\\
&{\rm s.t.} & \tau\in\Delta_{\beta,q}.
\end{eqnarray}
\end{subequations}  If $\tau^{\star}$ is an optimal solution of \eqref{9pp}, then the stationary policy $f^{\star}$ is given by $f^{\star}(s,a)=\frac{\tau^{\star}(s,a)}{\sum\limits_{a\in\mathcal{A}(s)}\tau^{\star}(s,a)}$ for all $(s,a)\in \Lambda$, whenever the denominator is nonzero.

\subsection{Constrained MDP}
In a constrained MDP (CMDP), on the basis of the MDP defined above, we consider extra constraints on some additional rewards. Let $r_{k}(s,a)_{(s,a)\in\Lambda}:\mathcal{S}\times\mathcal{A}\rightarrow\mathbb{R}$ be the rewards for the $k-$th constraint under state $s$ and action $a$, $k=1,\dots,K$, and $K$ denotes the number of constraints. We denote $r_k=(r_{k}(s,a))_{(s,a)\in\Lambda}\in\mathbb{R}^{|\Lambda|}$ as the rewards vector 
%for each $k$
for the $k-$th constraint. Let $\Xi=(\xi_k)_{k=1}^{K}$ be the set of lower bounds for the
constraints.
%expected constrained value functions. 
A CMDP is then defined by the tuple $(\mathcal{S}, \mathcal{A}, R, \Xi, P, q, \beta)$ where $R=(r_k)_{k=0}^{K}.$

In order to focus on optimizing current rewards rather than future ones, we apply the discount factor in the expected constrained value function, which is defined as $$\phi_{k,\beta}(q,\pi_h)=\frac{1}{1-\beta}\sum_{(s,a)\in\Lambda}d_{\beta}(q,\pi_h,s,a) r_k(s,a)$$
%\begin{equation}
%    \phi_{k,\beta}(q,\pi_h)=\frac{1}{1-\beta}\sum_{(s,a)\in\Lambda}d_{\beta}(q,\pi_h,s,a) r_k(s,a)
%\end{equation} 
for the $k$-th expected constraint. Combined with Lemma \ref{th1}, we can formulate the objective of a CMDP as the following optimization problem
\begin{subequations}
\begin{eqnarray}
& \max\limits_{\tau} & \frac{1}{1-\beta}\sum\limits_{(s,a)\in\Lambda}\tau(s,a)r_0(s,a)\\
&{\rm s.t.} & \sum\limits_{(s,a)\in\Lambda}\tau(s,a)r_k(s,a)\ge \xi_{k},  k=1,2,...,K\\
&&\tau\in\Delta_{\beta,q}.
\end{eqnarray}
\end{subequations}

%\subsection{RCMDP}
%Based on the definition of MDP above, we assume that the rewards vectors $r_k, k=0,1,...,K$ are random, and the transition probabilities are known. A most common and useful approach to handle the uncertainty is robust optimization. That is we assume the uncertain parameters are constrained to be in a set, which is uncertain. And we consider the worst-case scenario over the set to solve the original optimization problem.

%When applying the approach of robustness, we get the robust constrained MDP (RCMDP), which can be defined by the tuple $(\mathcal{S}, \mathcal{A}, R, \Xi, P, \Omega_{R}, q, \alpha)$. In this tuple, $\Omega_{R}=\bm{\bigtimes}_{k=0}^{K}\Omega_{r_k}$ and $\Omega_{r_k}$ denotes the uncertain set of the running rewards. In a RCMDP with random rewards and deterministic transition probabilities, the agent aims at solving the following optimization problem under the worst-case,
%\begin{subequations}
%\begin{eqnarray}
%\max\limits_{\tau} & \inf\limits_{r_0\in\Omega_{r_0}} & \frac{1}{1-\alpha}\sum\limits_{(s,a)\in\Lambda}\tau(s,a)r_0(s,a)\\
%{\rm s.t.} & \inf\limits_{r_k \in\Omega_{r_k}} & \sum\limits_{(s,a)\in\Lambda}\tau(s,a)r_k(s,a)\ge \xi_{k},  k=1,2,...,K\\
%&&\tau\in\Delta_{\alpha,q}.
%\end{eqnarray}
%\end{subequations}

\subsection{Chance constrained MDP}
%From \cite{delage2010percentile} we know that the generated optimal policies by RCMDP are sometimes overly conservative that we need to turn to better approaches to handle the uncertainty in the optimization problem of an MDP. To make up over conservation caused by the worst-case scenario, we could use chance constraints to ensure that the objective cumulative rewards are beyond certain values with high probability. Based on chance constraints, we get a relatively conservative policy compared with RCMDP, which can be classified into a soft-robust MDP by \cite{chen2020overview}. We call the MDP using chance constraints to handle the randomness the chance-constrained MDP (CCMDP). 

In many applications, the reward vectors $r_{k}, k=0,1,...,K$ are random. It is reasonable to consider the MDP with random reward. In this vein, we can use chance constraints to ensure the constraints in the CMDP hold with a large probability. We denote it as the chance-constrained MDP (CCMDP).

For the $k$-th random constrained rewards vector $r_k=(r_{k}(s,a))_{(s,a)\in\Lambda}$, we assume its probability distribution is $F_k$, $k=0,1,...,K$. We preset a confidence vector $\epsilon=(\epsilon_k)_{k=1}^{K}$ for the CCMDP, where $\epsilon_k\in[0,1]$. Then we can define the individual CCMDP (I-CCMDP) as a tuple $(\mathcal{S}, \mathcal{A}, R, \Xi, P, \mathcal{D}, q, \beta, \epsilon)$, where $\mathcal{D}=(F_k)_{k=0}^{K}$, which can be reformulated as the following optimization problem:
\begin{subequations}
\begin{eqnarray}
\rm{(I-CCMDP)} & \max\limits_{\tau} & \frac{1}{1-\beta}\mathbb{E}_{F_0}[\tau^{\top}\cdot r_0]\\
&{\rm s.t.} & \mathbb{P}_{F_k}(\tau^{\top}\cdot r_k\ge \xi_{k})\ge\epsilon_k,  k=1,2,...,K\\
&&\tau\in\Delta_{\beta,q}.
\end{eqnarray}
\end{subequations}

The joint CCMDP (J-CCMDP) can be defined as a tuple $(\mathcal{S}, \mathcal{A}, R, \Xi, P, F$, $q, \beta, \hat{\epsilon})$, where $\hat{\epsilon}\in[0,1]$. The J-CCMDP can be reformulated as 
\begin{subequations}
\begin{eqnarray}
\rm{(J-CCMDP)} & \max\limits_{\tau} & \frac{1}{1-\beta}\mathbb{E}_{F_0}[\tau^{\top}\cdot r_{0}]\\
&{\rm s.t.} & \mathbb{P}_{F}(\tau^{\top}\cdot r_k\ge \xi_{k},  k=1,2,...,K)\ge\hat{\epsilon}, \\
&&\tau\in\Delta_{\beta,q},
\end{eqnarray}
\end{subequations} here $F$ denotes the joint probability distribution of $r_1, r_2,...,r_K$ when $\hat{\epsilon}$ denotes the overall confidence for $K$ constraints.

%LJ: double check whether there are further studies on chance constrained MDP    ???

\subsection{Distributionally robust chance constrained MDP}
Based on the CCMDP defined above, if the information of distributions of rewards $r_k$ are not perfectly known, we can apply the distributionally robust optimization approach to handle the uncertainty of $\hat{F}$ or $F_{k},k=0,...,K$. Then we consider a distributionally robust chance constrained MDP (DRCCMDP).

The individual DRCCMDP (I-DRCCMDP) can be defined as the tuple $(\mathcal{S}, \mathcal{A}, R$, $\Xi, P, \mathcal{D}, \tilde{\mathcal{F}}, q, \beta, \epsilon)$, where $\tilde{\mathcal{F}}=(\mathcal{F}_k)_{k=0}^{K}$ when $\mathcal{F}_k$ denotes the ambiguity set for the random distribution $F_k$. Therefore the I-DRCCMDP can be reformulated as the following optimization problem:
\begin{subequations}\label{obj MDP}
\begin{eqnarray}
\rm{(I-DRCCMDP)}  & \max\limits_{\tau} & \inf\limits_{F_{0}\in\mathcal{F}_0} \   \frac{1}{1-\beta}\mathbb{E}_{F_0}[\tau^{\top}\cdot r_0] \label{kj} \\
& {\rm s.t.} & \inf\limits_{F_{k}\in\mathcal{F}_k} \ \mathbb{P}_{F_k}\label{pd}(\tau^{\top}\cdot r_k\ge \xi_{k})\ge\epsilon_k,\   k=1,2,...,K, \\
&&\tau\in\Delta_{\beta,q}.
\end{eqnarray}
\end{subequations}

The joint DRCCMDP (J-DRCCMDP) can be defined as the tuple $(\mathcal{S}, \mathcal{A}, R$, $\Xi, P, \mathcal{D}, \mathcal{F}_0, \mathcal{F}, q, \beta, \hat{\epsilon})$, where $\hat{\epsilon}\in[0,1]$, $\mathcal{F}_0$ denotes the ambiguity set for the unknown
%random 
distribution $F_0$ and $\mathcal{F}$ denotes the ambiguity set for the unknown %random
joint distribution $F$ of $r_{1}, r_2,...,r_k$. The J-DRCCMDP can be reformulated as
\begin{subequations}\label{Jobj}
\begin{eqnarray}
\rm{(J-DRCCMDP)}  & \max\limits_{\tau}&\inf\limits_{F_{0}\in\mathcal{F}_0} \  \frac{1}{1-\beta}\mathbb{E}_{F_0}[\tau^{\top}\cdot r_0]\label{12a}\\
 & {\rm s.t.} & \inf\limits_{F\in\mathcal{F}}\  \label{JF}\mathbb{P}_{F}(\tau^{\top}\cdot r_k\ge \xi_{k}, k=1,2,...,K)\ge\hat{\epsilon}, \\
&&\tau\in\Delta_{\beta,q}.
\end{eqnarray}
\end{subequations}

\section{K-L divergence based DRCCMDP}\label{j3}

In this Section, we study DRCCMDP with the K-L divergence distance \cite{joyce2011kullback} based ambiguity set (K-L DRCCMDP). In Section \ref{se3.1}, we study the individually K-L DRCCMDP (K-L I-DRCCMDP) with an elliptical reference distribution. In Section \ref{se3.2}, we study the jointly K-L DRCCMDP (K-L J-DRCCMDP) with an elliptical reference distribution. In Section \ref{se3.3}, we study the K-L J-DRCCMDP with an elliptical mixture reference distribution.

\begin{definition}
Let $D_{\rm{KL}}$ denotes the Kullback-Leibler divergence distance
$$ D_{\rm{KL}}({F}_{k}||\tilde{F}_{k})=\int_{\Omega_{k}} \phi\left(\frac{f_{{F}_{k}}(r_{k})}{f_{\tilde{F}_{k}}(r_{k})}\right)f_{\tilde{F}_{k}}(r_{k}) dr_{k},$$
%\begin{equation}
%    D_{\rm{KL}}({F}_{k}||\tilde{F}_{k})=\int_{\Omega_{k}} \phi\left(\frac{f_{{F}_{k}}(r_{k})}{f_{\tilde{F}_{k}}(r_{k})}\right)f_{\tilde{F}_{k}}(r_{k}) dr_{k},
%\end{equation} 
where $\tilde{F}_{k}$ is the reference distribution of $r_{k}$, $f_{{F}_{k}}(r_{k})$ and $f_{\tilde{F}_{k}}(r_{k})$ are the density functions of the true distribution and the reference distribution of $r_{k}$ on support $\Omega_{k}$ respectively. $\phi(t)$ is defined as follows 
$$
\phi(t)=\left\{\begin{array}{ll}
t{\rm{log}}t-t+1, & t\ge0,\\
\infty, & t<0.
\end{array}\right.
$$
%\begin{equation}
%\phi(t)=\left\{\begin{array}{ll}
%t{\rm{log}}t-t+1, & t\ge0,\\
%\infty, & t<0.
%\end{array}\right.
%\end{equation}
\end{definition}

\subsection{K-L I-DRCCMDP with elliptical reference distribution}\label{se3.1}
In this Section, we study the I-DRCCMDP whose ambiguity sets are based on the K-L divergence distance.

\begin{assumption}\label{12}
The ambiguity sets are $$\mathcal{F}_{k}=\left\{{F}_{k}|D_{\rm{KL}}({F}_{k}||\tilde{F}_{k})\le \delta_{k}\right\}, k=0,1,...,K,$$
%\begin{equation}
%\mathcal{F}_{k}=\left\{{F}_{k}|D_{\rm{KL}}({F}_{k}||\tilde{F}_{k})\le \delta_{k}\right\}, k=0,1,...,K,
%\end{equation} 
where $\tilde{F}_{k}$ is the reference distribution of reward vector $r_{k}$, the radius $\delta_{k}$ controls the size of the ambiguity sets. 
\end{assumption}

We assume that the reference distribution belongs to the elliptical distribution family. 

\begin{definition}[\cite{fang2018symmetric}]\label{906}
A d-dimensional vector $X$ %\in\mathbb{R}^d$
follows an elliptical distribution  $E_{d}(\mu, \Sigma, \psi)$ if its characteristic function has the form $\mathbb{E}(e^{ib^{\top} X})=e^{ib^{\top} \mu}\psi(b^{\top}\Sigma b),$ where $\mu\in\mathbb{R}^d$ is the location parameter, $\Sigma\in\mathbb{R}^{d\times d}$ is the dispersion matrix, $\psi$ is the characteristic generator.
\end{definition}

The elliptical distribution has the following property.
%Table 1 gives the characteristic generator of three different elliptical distributions.
\begin{lemma}[\cite{mcneil2015quantitative}]\label{2}
Let $X\sim E_{d}(\mu, \Sigma, \psi)$. For any $B\in\mathbb{R}^{k\times d}$ and $b\in\mathbb{R}^{d}$, $$BX+b\sim E_{d}(B\mu+b, B\Sigma B^{\top}, \psi).$$
%\begin{equation}
%BX+b\sim E_{d}(B\mu+b, B\Sigma B^{\top}, \psi).
%\end{equation} 
As a special case, if $a\in\mathbb{R}^{d}$, then $$a^{\top}X\sim E_{1}(a^{\top}\mu, a^{\top}\Sigma a, \psi).$$
%\begin{equation}
%   a^{\top}X\sim E_{1}(a^{\top}\mu, a^{\top}\Sigma a, \psi).
%\end{equation}
\end{lemma}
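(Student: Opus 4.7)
The plan is to verify the claim directly from the characteristic-function definition of elliptical distributions given in Definition \ref{906}, using the fact that characteristic functions uniquely determine distributions.

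First I would compute, for any test vector $t$ of appropriate dimension,
\[
\mathbb{E}\bigl(e^{it^\top (BX+b)}\bigr) = e^{it^\top b}\,\mathbb{E}\bigl(e^{i(B^\top t)^\top X}\bigr),
\]
by pulling the deterministic exponential out of the expectation and regrouping the inner product as $t^\top B X = (B^\top t)^\top X$. Next I would apply the elliptical characteristic function of $X$ with argument $c := B^\top t$, which yields $\mathbb{E}(e^{ic^\top X}) = e^{ic^\top \mu}\psi(c^\top \Sigma c)$. Substituting back and collecting exponents produces
\[
\mathbb{E}\bigl(e^{it^\top (BX+b)}\bigr) = e^{it^\top (B\mu+b)}\,\psi\bigl(t^\top (B\Sigma B^\top) t\bigr),
\]
which is exactly the characteristic function of an elliptical distribution with location parameter $B\mu+b$, dispersion matrix $B\Sigma B^\top$, and the same characteristic generator $\psi$. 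Uniqueness of the characteristic function then delivers the first claim.

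For the special case, I would specialise the general result to $B = a^\top$ (viewed as a $1 \times d$ matrix) and $b = 0$, so that $B\mu = a^\top \mu$ and $B\Sigma B^\top = a^\top \Sigma a$, yielding $a^\top X \sim E_1(a^\top \mu, a^\top \Sigma a, \psi)$ immediately.

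There is no real obstacle here: the entire argument is essentially a one-line substitution at the level of characteristic functions. The only step requiring mild care is the bookkeeping in the quadratic form $t^\top (B\Sigma B^\top) t = (B^\top t)^\top \Sigma (B^\top t)$, which is precisely what ensures that the generator $\psi$ carries over unchanged to the pushforward distribution and that the dispersion transforms as $\Sigma \mapsto B\Sigma B^\top$.
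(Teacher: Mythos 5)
Your proof is correct and is exactly the standard characteristic-function argument used in the cited reference \cite{mcneil2015quantitative}; the paper itself states this lemma without proof, so there is nothing different to compare against. (One small remark on the statement rather than your argument: for $B\in\mathbb{R}^{k\times d}$ the shift $b$ and the resulting distribution live in $\mathbb{R}^{k}$, i.e.\ $BX+b\sim E_{k}(B\mu+b,\,B\Sigma B^{\top},\,\psi)$, which your computation with a test vector $t\in\mathbb{R}^{k}$ implicitly and correctly reflects.)
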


A random vector $X$ follows a multivariate log-elliptical distribution with parameters $\mu$ and $\Sigma$ if $\log{X}$ follows an elliptical distribution: $$\log{X}\sim E_{d}(\mu, \Sigma, \psi),$$ which can be denoted as $X\sim LE_{d}(\mu, \Sigma, \psi)$. The following lemma defines the expectation of log-elliptical distributions.
\begin{lemma}[\cite{hamada2008capm}]\label{3}
Let $X\sim LE_{d}(\mu, \Sigma, \psi)$ with components $X_1, X_2,...,X_K$. If the mean of $X_{k}$ exists, then $$\mathbb{E}(X_{k})=e^{\mu_{k}}\psi(-\frac{1}{2}\sigma_{k}^2),$$ where $\mu_{k}$ and $\sigma_{k}^2$ are the mean and variance of $X_{k}$ respectively.
\end{lemma}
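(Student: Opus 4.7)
The plan is to reduce to a one-dimensional problem by projecting onto the $k$-th coordinate via Lemma~\ref{2}, and then to compute a Laplace-type transform of the resulting univariate elliptical variable in terms of the characteristic generator $\psi$.

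First, I would set $Y := \log X$, so that by the definition of the log-elliptical family $Y \sim E_d(\mu, \Sigma, \psi)$, and in particular $\mathbb{E}(X_k) = \mathbb{E}(e^{Y_k})$ where $Y_k = \log X_k$. Applying Lemma~\ref{2} with the linear map $a = e_k$ (the $k$-th standard basis vector) yields $Y_k \sim E_1(\mu_k, \sigma_k^2, \psi)$, whose characteristic function is $\mathbb{E}(e^{itY_k}) = e^{it\mu_k}\psi(t^2 \sigma_k^2)$ for real $t$. This is the only place where the multivariate structure is used; after this reduction the problem is purely about the univariate elliptical law $E_1(\mu_k,\sigma_k^2,\psi)$.

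The next step is to extract $\mathbb{E}(e^{Y_k})$ from the characteristic function. Formally setting $t = -i$ in the expression above gives the candidate identity $\mathbb{E}(e^{Y_k}) = e^{\mu_k}\psi(-\tfrac{1}{2}\sigma_k^2)$, with the factor $\tfrac{1}{2}$ reflecting the normalization of $\psi$ used by Hamada and Valdez. To justify this rigorously, I would use one of two routes: (i) analytic continuation of the moment generating function $s \mapsto \mathbb{E}(e^{sY_k})$ to a complex neighborhood of the real line, using the standing hypothesis that $\mathbb{E}(X_k)$ exists to guarantee absolute convergence at the relevant evaluation point; or (ii) the stochastic representation $Y_k = \mu_k + \sigma_k R S$, where $S$ is uniform on $\{-1,+1\}$ and $R$ is the nonnegative radial variable attached to $\psi$, so that $\mathbb{E}(e^{Y_k}) = e^{\mu_k}\mathbb{E}[\cosh(\sigma_k R)]$, and then identify this expectation with $\psi$ evaluated at the appropriate negative argument through the defining integral representation of the characteristic generator.

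The main obstacle is precisely this transfer from the characteristic (oscillatory) domain to the Laplace (exponential) domain. A priori $\psi$ is only supplied on $[0,\infty)$ through its role in the characteristic function, so giving meaning to $\psi$ on the negative half-line requires either real analyticity of the underlying integral transform of the radial law of $R$ or a direct verification that the assumed finiteness of $\mathbb{E}(X_k)$ forces convergence of the associated Laplace integral. Once this extension is in hand, the claimed formula follows by direct substitution, and the argument applies uniformly to each component $X_k$.
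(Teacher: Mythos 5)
The paper offers no proof of this lemma --- it is imported verbatim from \cite{hamada2008capm} --- so there is nothing in-paper to compare against; your sketch is essentially the standard argument from that source and is sound. The reduction via Lemma~\ref{2} to the univariate marginal $Y_k=\log X_k\sim E_1(\mu_k,\sigma_k^2,\psi)$ is the right first move, and of your two routes for passing from the oscillatory to the exponential domain, (ii) is the cleaner one to make rigorous: writing $Y_k=\mu_k+\sigma_k R S$ with $S$ uniform on $\{-1,+1\}$ independent of the radial variable $R\ge 0$ gives $\mathbb{E}(e^{Y_k})=e^{\mu_k}\mathbb{E}[\cosh(\sigma_k R)]$, and the hypothesis that $\mathbb{E}(X_k)$ exists is precisely the finiteness of this expectation, which legitimizes defining $\psi$ on the negative half-line by the corresponding Laplace integral. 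Route (i) needs one extra observation you do not spell out: the evaluation point $t=-i$ lies on the boundary of the strip of analyticity, so you should first use the symmetry of $Y_k-\mu_k$ to get finiteness of the moment generating function on all of $[-1,1]$ and then invoke continuity of the continuation up to the boundary.

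One point you raise in passing deserves emphasis because it exposes a notational inconsistency in the paper itself. Under Definition~\ref{906} as literally written, $\mathbb{E}(e^{itY_k})=e^{it\mu_k}\psi(t^2\sigma_k^2)$, and the formal substitution $t=-i$ yields $e^{\mu_k}\psi(-\sigma_k^2)$, not $e^{\mu_k}\psi(-\tfrac{1}{2}\sigma_k^2)$. The stated conclusion, and the Gaussian generator $\psi(t)=e^{-t}$ in Table 1 (which must recover the log-normal mean $e^{\mu_k+\sigma_k^2/2}$), are consistent only with the Hamada--Valdez normalization $\mathbb{E}(e^{ib^{\top}X})=e^{ib^{\top}\mu}\psi(\tfrac{1}{2}b^{\top}\Sigma b)$. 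Your attribution of the factor $\tfrac{1}{2}$ to that convention is exactly right; the only caveat is that a fully self-contained proof within this paper would have to either adopt that convention explicitly or restate the lemma with $\psi(-\sigma_k^2)$.
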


Gaussian, Laplace and Generalized stable laws distributions are all elliptical distributions with different characteristic generator $\psi:[0,+\infty)\rightarrow\mathbb{R}$ as shown in Table 1.
\begin{table}[h]
\centering
\caption{\normalsize The characteristic generator of three different elliptical distributions}
\begin{tabular}{c|c|c|c}
\hline
Distribution & Gaussian & Laplace & Generalized stable laws \\ \hline
Characteristic generator $\psi(t)$ & $e^{-t}$ & $\frac{1}{1+t}$ & $e^{-\omega_{1}t^{\frac{\omega_2}{2}}}$, $\omega_{1},\omega_{2}>0$ \\ \hline
\end{tabular}
\end{table}

Before considering the reformulation of K-L I-DRCCMDP centered at elliptical distributions, we cite two important lemmas from literature.

\begin{lemma}[\cite{hu2013kullback}]\label{98}
Given Assumption \ref{12}, the objective function in \eqref{kj} is equivalent to $$\inf\limits_{\alpha\in[0,+\infty)} \alpha {\rm{log}} \mathbb{E}_{\tilde{F}_{0}}\left[\exp(-\frac{\tau^{\top}r_{0}}{\alpha})\right]+\alpha \delta_{0}.$$
%\begin{equation}
%\inf\limits_{\alpha\in[0,+\infty)} \alpha {\rm{log}} \mathbb{E}_{\tilde{F}_{0}}\left[\exp(-\frac{\tau^{\top}r_{0}}{\alpha})\right]+\alpha \delta_{0}.
%\end{equation}
\end{lemma}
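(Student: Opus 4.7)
The plan is to derive this reformulation by applying Lagrangian duality to the inner distributionally robust expectation problem, which is exactly the technique developed in \cite{hu2013kullback}. For a fixed feasible $\tau$, the inner infimum in \eqref{kj} is the infinite-dimensional convex program
$$\inf_{f\ge 0}\ \int \tau^{\top} r_{0}\, f(r_{0})\, dr_{0}$$
over densities $f$ subject to the normalization $\int f\, dr_{0}=1$ and the divergence constraint $\int f(r_{0}) \log\bigl(f(r_{0})/f_{\tilde{F}_{0}}(r_{0})\bigr)\, dr_{0}\le \delta_{0}$ (the $\phi(t)=t\log t-t+1$ formulation collapses to the standard form once the normalization is imposed). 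The positive multiplier $\frac{1}{1-\beta}$ commutes with the infimum and can be attached at the end.

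First, I would introduce Lagrange multipliers $\alpha\ge 0$ for the K-L constraint and $\lambda\in\mathbb{R}$ for the normalization, write the Lagrangian, and perform a pointwise minimization over $f(r_{0})\ge 0$. The variational condition yields a Gibbs-type minimizer
$$f^{\star}(r_{0})\;=\;f_{\tilde{F}_{0}}(r_{0})\,\exp\!\left(\frac{\lambda-\tau^{\top}r_{0}}{\alpha}-1\right),$$
and the normalization forces $e^{\lambda/\alpha-1}=1/\mathbb{E}_{\tilde{F}_{0}}[\exp(-\tau^{\top}r_{0}/\alpha)]$. Substituting $f^{\star}$ back into the Lagrangian causes the Gibbs contribution to cancel, leaving a dual function in $\alpha$ alone whose expression (up to the sign convention of the statement) is $\alpha\log\mathbb{E}_{\tilde{F}_{0}}[\exp(-\tau^{\top}r_{0}/\alpha)]+\alpha\delta_{0}$, from which the advertised reformulation follows by optimizing over $\alpha\ge 0$.

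To close the argument, I would verify strong duality. The primal program is convex (linear objective, jointly convex K-L constraint, linear normalization), and Slater's condition holds because the reference itself satisfies $D_{\rm{KL}}(\tilde{F}_{0}\|\tilde{F}_{0})=0<\delta_{0}$. This permits the interchange of infimum and supremum, so the primal and dual values coincide. A cleaner alternative, given that the excerpt allows citing earlier results, is to invoke Theorem~1 of \cite{hu2013kullback} directly after checking that the ambiguity set $\mathcal{F}_{0}$ and the linear integrand $\tau^{\top} r_{0}$ match its hypotheses.

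The main obstacle is the infinite-dimensional character of the problem: one must certify that the candidate $f^{\star}$ is an honest integrable density and that the moment-generating-type quantity $\mathbb{E}_{\tilde{F}_{0}}[\exp(-\tau^{\top}r_{0}/\alpha)]$ is finite on the relevant range of $\alpha$. Since $\tilde{F}_{0}$ is elliptical, this tail-integrability can be read off from the characteristic generator $\psi$ (Gaussian tails give integrability for all $\alpha>0$, while Laplace or generalized stable-law tails restrict $\alpha$ to a nonempty interval), and a boundary analysis as $\alpha\to 0^{+}$ and $\alpha\to\infty$ then shows that the dual infimum is well-defined and attained on $[0,+\infty)$, completing the proof.
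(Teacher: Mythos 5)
The paper offers no proof of this lemma---it is imported verbatim from \cite{hu2013kullback}---and your Lagrangian-duality reconstruction (Gibbs-form minimizer, normalization of the multiplier, Slater's condition via $D_{\rm{KL}}(\tilde{F}_{0}\|\tilde{F}_{0})=0<\delta_{0}$) is exactly the argument of that reference, correctly adapted to the infimum/worst-case-minimum form, including the observation that the stated ``equivalence'' holds up to the sign flip that turns the max problem \eqref{kj} into the min problem used in Theorem \ref{1a}. Your attention to integrability of $\mathbb{E}_{\tilde{F}_{0}}[\exp(-\tau^{\top}r_{0}/\alpha)]$ and the boundary behaviour at $\alpha\to 0^{+}$ is a point the paper itself glosses over, so the proposal is sound.
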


\begin{lemma}[\cite{jiang2016data}]\label{57}
Given Assumption \ref{12}, the constraint \eqref{pd} is equivalent to $$\mathbb{P}_{\tilde{F}_{k}}(\tau^{\top}r_{k}\ge\xi_{k})\ge \tilde{\epsilon}_k, k=1,2,...,K,$$
%\begin{equation}
%\mathbb{P}_{\tilde{F}_{k}}(\tau^{\top}r_{k}\ge\xi_{k})\ge \tilde{\epsilon}_k, k=1,2,...,K,
%\end{equation} 
where $\tilde{\epsilon}_k=\inf\limits_{x\in(0,1)}\{\frac{e^{-\delta_{k}}x^{\epsilon_{k}}-1}{x-1}\}$.
\end{lemma}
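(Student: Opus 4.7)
The plan is to rewrite the inner worst-case probability on the left-hand side of \eqref{pd} as a one-dimensional convex program over a Lagrange multiplier, and then convert that formula into a condition on $\mathbb{P}_{\tilde{F}_k}(\tau^\top r_k \ge \xi_k)$. Fix $\tau\in\Delta_{\beta,q}$, set $A=\{\tau^{\top} r_{k}\ge\xi_{k}\}$, and let $p=\mathbb{P}_{\tilde{F}_{k}}(A)$. Since $\mathbb{1}_{A}$ is linear in $F_k$ and $D_{\mathrm{KL}}(\cdot\|\tilde{F}_{k})$ is convex, the problem $\inf_{F_{k}}\{\mathbb{E}_{F_{k}}[\mathbb{1}_{A}]:D_{\mathrm{KL}}(F_{k}\|\tilde{F}_{k})\le\delta_{k}\}$ is a convex optimisation problem; Slater's condition is trivially satisfied for $\delta_{k}>0$, so strong duality will hold.

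Next, I would invoke the Donsker--Varadhan variational identity $\sup_{F}\{\mathbb{E}_{F}[g]-D_{\mathrm{KL}}(F\|\tilde{F}_{k})\}=\log\mathbb{E}_{\tilde{F}_{k}}[e^{g}]$ to take the Lagrangian dual of the K-L constraint and obtain
\[
\inf_{F_{k}\in\mathcal{F}_{k}}\mathbb{P}_{F_{k}}(A)=\sup_{\lambda\ge 0}\left\{-\lambda\log\mathbb{E}_{\tilde{F}_{k}}\!\left[e^{-\mathbb{1}_{A}/\lambda}\right]-\lambda\delta_{k}\right\}.
\]
Because $A$ is a binary event under $\tilde{F}_k$, the inner expectation collapses to $p\,e^{-1/\lambda}+(1-p)$. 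Requiring that this supremum be at least $\epsilon_{k}$ is equivalent to the existence of some $\lambda>0$ satisfying $-\lambda\log(p\,e^{-1/\lambda}+1-p)-\lambda\delta_{k}\ge\epsilon_{k}$. Exponentiating, isolating $p$, and flipping the inequality (since $e^{-1/\lambda}-1<0$) then yields
\[
p\ge\frac{e^{-\epsilon_{k}/\lambda-\delta_{k}}-1}{e^{-1/\lambda}-1}.
\]

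Finally, the substitution $x=e^{-1/\lambda}$ bijects $\lambda\in(0,+\infty)$ onto $x\in(0,1)$ and turns $e^{-\epsilon_{k}/\lambda}$ into $x^{\epsilon_{k}}$, so the previous inequality reads $p\ge(e^{-\delta_{k}}x^{\epsilon_{k}}-1)/(x-1)$. Existence of such a $\lambda$ is therefore equivalent to $p\ge\inf_{x\in(0,1)}(e^{-\delta_{k}}x^{\epsilon_{k}}-1)/(x-1)=\tilde{\epsilon}_{k}$, which is exactly the claimed reformulation $\mathbb{P}_{\tilde{F}_{k}}(\tau^{\top}r_{k}\ge\xi_{k})\ge\tilde{\epsilon}_{k}$. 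The step I expect to be the main obstacle is the rigorous justification of the duality identity and the switch of the infimum with the Donsker--Varadhan supremum: I will need to confirm that the K-L ball is large enough for Slater, that the optimal $\lambda$ lies in the open half-line $(0,+\infty)$ when $0<\epsilon_k<1$, and that the degenerate cases $p\in\{0,1\}$ and $\lambda\to 0^{+}$ cause no issues; once these technicalities are settled, the remaining algebraic manipulations that produce $\tilde{\epsilon}_{k}$ are routine.
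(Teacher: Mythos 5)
The paper offers no proof of this lemma: it is quoted directly from Jiang and Guan \cite{jiang2016data}, so there is no in-paper argument to compare against. Your derivation is correct in substance and is a legitimate, essentially self-contained route to the same formula: you specialize the Hu--Hong/Donsker--Varadhan dual of the worst-case expectation (the same identity the paper invokes as Lemma~\ref{98} for the objective) to the indicator $\mathbf{1}_{A}$ of $A=\{\tau^{\top}r_k\ge\xi_k\}$, exploit the fact that $\mathbb{E}_{\tilde F_k}[e^{-\mathbf{1}_A/\lambda}]$ depends on $\tilde F_k$ only through $p=\mathbb{P}_{\tilde F_k}(A)$, and then substitute $x=e^{-1/\lambda}$; Jiang and Guan instead run the general $\phi$-divergence conjugate-duality argument and read off the Kullback--Leibler case, but for this divergence the two computations coincide. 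Two attainment issues, which you flag but should state explicitly to close the argument: (i) replacing ``$\sup_{\lambda\ge0}(\cdot)\ge\epsilon_k$'' by ``there exists $\lambda>0$ with $(\cdot)\ge\epsilon_k$'' is justified because the dual objective is continuous on $(0,\infty)$ and tends to $0$ (for $p<1$) as $\lambda\to0^{+}$ and to $-\infty$ as $\lambda\to\infty$, so for $\epsilon_k>0$ a supremum of at least $\epsilon_k$ is attained at an interior $\lambda$; and (ii) replacing ``there exists $x\in(0,1)$ with $p\ge g(x)$'' by ``$p\ge\inf_{x\in(0,1)}g(x)$'' requires the infimum defining $\tilde\epsilon_k$ to be attained, which holds for $\epsilon_k\in(0,1)$ and $\delta_k>0$ and is precisely the fact the paper later cites from the proof of Proposition~4 of \cite{jiang2016data} when it inverts $\chi_k$. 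With those two points made explicit (and the degenerate cases $\epsilon_k\in\{0,1\}$ set aside or checked directly), your proof is complete.
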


Based on these two Lemmas, we get the following reformulation of \eqref{obj MDP}.
\begin{theorem}\label{1a}
We study the ambiguity set in Assumption \ref{12}. We assume the reference distribution $\tilde{F}_{k}\sim E_{|\Lambda|}(\mu_{k}, \Sigma_{k}, \psi_{k}), k=0,1,...,K$, $\Sigma_0$ is a positive definite matrix, $\psi_{0}$ is continuous and %satisfies 
$\inf\limits_{t\le 0}\psi_{0}(t)\ge e^{-\delta_{0}}$. Then (I-DRCCMDP) problem \eqref{obj MDP} is equivalent to
\begin{subequations}\label{ujn}
\begin{eqnarray}
& \min\limits_{\tau,\alpha} & -\tau^{\top}\mu_{0}+\alpha\log{[\psi_{0}(-\frac{\tau^{\top}\Sigma_{0}\tau}{2\alpha^2})]}+\alpha\delta_{0},\\
& {\rm{s.t.}} & \tau^{\top}\mu_{k}+\Phi_{k}^{-1}(1-\tilde{\epsilon}_{k})\sqrt{\tau^{\top}\Sigma_{k}\tau}\ge\xi_{k}, k=1,2,\dots,K,\\
&& \alpha\ge 0,\\
&&\tau\in \Delta_{\beta,q},
\end{eqnarray}
\end{subequations} where $\Phi_{k}$ is the CDF of the variable $Z_{k}\sim E_{1}(0,1,\psi_{k})$, $\tilde{\epsilon}_k=\inf\limits_{x\in(0,1)}\{\frac{e^{-\delta_{k}}x^{\epsilon_{k}}-1}{x-1}\}$.
\end{theorem}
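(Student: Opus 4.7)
My plan is to split the proof into two pieces that mirror the two halves of the reformulation: the objective (which requires handling the worst-case expected value) and the $K$ distributionally robust chance constraints. In both pieces, the common strategy is to first apply the K-L duality lemmas (Lemmas \ref{98} and \ref{57}) to eliminate the inner optimization over $F_k$ in terms of the reference distribution $\tilde F_k$, and then exploit the affine-stability of elliptical laws (Lemma \ref{2}) to reduce $\tau^\top r_k$ to a one-dimensional elliptical random variable whose MGF/CDF is available in closed form via $\psi_k$.

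For the objective, since \eqref{kj} is a maximization, I will first convert it into an equivalent minimization by negation (the positive factor $\tfrac{1}{1-\beta}$ is immaterial for the optimization and can be absorbed). Applying Lemma \ref{98} in its dual (minimization) form expresses the outer problem as $\min_{\tau,\alpha\ge 0}\,\alpha\log \mathbb{E}_{\tilde F_0}[\exp(-\tau^\top r_0/\alpha)]+\alpha\delta_0$. By Lemma \ref{2}, $\tau^\top r_0\sim E_1(\tau^\top\mu_0,\tau^\top\Sigma_0\tau,\psi_0)$, and the characteristic-generator convention consistent with Table 1 (e.g.\ $\psi(t)=e^{-t}$ for the Gaussian) yields the MGF identity $\mathbb{E}[\exp(tY)]=\exp(t\nu)\,\psi(-t^2\sigma^2/2)$ for $Y\sim E_1(\nu,\sigma^2,\psi)$. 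Setting $t=-1/\alpha$, $\nu=\tau^\top\mu_0$, $\sigma^2=\tau^\top\Sigma_0\tau$ gives
\[
\mathbb{E}_{\tilde F_0}[\exp(-\tau^\top r_0/\alpha)]=\exp(-\tau^\top\mu_0/\alpha)\cdot\psi_0\!\bigl(-\tau^\top\Sigma_0\tau/(2\alpha^2)\bigr).
\]
Taking $\log$, multiplying by $\alpha$, and adding $\alpha\delta_0$ recovers the objective of \eqref{ujn}.

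For the constraints, I will apply Lemma \ref{57} to each $k$ to reduce the ambiguous chance constraint to $\mathbb{P}_{\tilde F_k}(\tau^\top r_k\ge\xi_k)\ge\tilde\epsilon_k$ with the stated $\tilde\epsilon_k$. Because every feasible $\tau\in\Delta_{\beta,q}$ sums to $1$ (hence $\tau\neq 0$), and invoking Lemma \ref{2} once more, $\tau^\top r_k\sim E_1(\tau^\top\mu_k,\tau^\top\Sigma_k\tau,\psi_k)$, so the standardized variable $Z_k=(\tau^\top r_k-\tau^\top\mu_k)/\sqrt{\tau^\top\Sigma_k\tau}$ follows $E_1(0,1,\psi_k)$ with CDF $\Phi_k$. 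The inequality $\mathbb{P}(\tau^\top r_k\ge\xi_k)=1-\Phi_k\!\bigl((\xi_k-\tau^\top\mu_k)/\sqrt{\tau^\top\Sigma_k\tau}\bigr)\ge\tilde\epsilon_k$ rearranges to $\tau^\top\mu_k+\Phi_k^{-1}(1-\tilde\epsilon_k)\sqrt{\tau^\top\Sigma_k\tau}\ge\xi_k$, as claimed.

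The main obstacle is justifying that the joint minimization in $(\tau,\alpha)$ is legitimate, i.e.\ that the inner $\inf_{\alpha\ge 0}$ can be brought outside and interpreted as a minimization over a well-defined real-valued objective. This is precisely where the technical hypotheses enter. Positive-definiteness of $\Sigma_0$ guarantees $\tau^\top\Sigma_0\tau>0$, so the argument $-\tau^\top\Sigma_0\tau/(2\alpha^2)$ sweeps $(-\infty,0)$ as $\alpha$ varies, and continuity of $\psi_0$ ensures the objective is continuous in $\alpha>0$. The condition $\inf_{t\le 0}\psi_0(t)\ge e^{-\delta_0}$ keeps $\psi_0$ strictly positive on $(-\infty,0]$ so that $\log\psi_0(\cdot)$ is everywhere defined, and simultaneously provides the lower bound $\alpha\log\psi_0(\cdot)+\alpha\delta_0\ge 0$, which rules out divergence to $-\infty$ as $\alpha\downarrow 0$; together with $\psi_0(0)=1$ forcing the objective to grow like $\alpha\delta_0\to\infty$ as $\alpha\uparrow\infty$, this yields a finite minimizer and completes the equivalence.
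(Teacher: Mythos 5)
Your proposal is correct and follows essentially the same route as the paper: apply Lemmas \ref{98} and \ref{57}, use the affine stability of elliptical laws together with the log-elliptical expectation to obtain the closed-form objective, invoke positive definiteness, continuity, and $\inf_{t\le 0}\psi_0(t)\ge e^{-\delta_0}$ to justify pulling the infimum over $\alpha$ out as a joint minimization, and standardize the chance constraint via $Z_k\sim E_1(0,1,\psi_k)$. The only cosmetic difference is that you argue attainment of the inner infimum via coercivity as $\alpha\to\infty$, whereas the paper compactifies to $[0,+\infty]$ and applies Weierstrass.
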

\begin{proof}  By Lemma \ref{98} and \ref{57}, problem \eqref{obj MDP} is equivalent to
\begin{subequations}
\begin{eqnarray}
&\min\limits_{\tau} & \inf\limits_{\alpha\in[0,+\infty)}  \alpha {\rm{log}} \mathbb{E}_{F_{0}}\left[\exp(-\frac{\tau^{\top}r_{0}}{\alpha})\right]+\alpha \delta_{0},\label{5}\\
& {\rm{s.t.}} & \mathbb{P}_{\tilde{F}_{k}}(\tau^{\top}r_{k}\ge\xi_{k})\ge \tilde{\epsilon}_k, k=1,2,...,K,\label{6}\\
&& \tau\in \Delta_{\beta,q},
\end{eqnarray}
\end{subequations} where $\tilde{\epsilon}_{k}$ is defined in Lemma \ref{57}.

As $r_0$ follows an elliptical distribution $E_{|\Lambda|}(\mu_{0}, \Sigma_{0}, \psi_{0})$, we have by Lemma \ref{2} that $-\frac{\tau^{\top}r_{0}}{\alpha}$ still follows an elliptical distribution with mean value $-\frac{\tau^{\top}\mu_{0}}{\alpha}$, variance $\frac{\tau^{\top}\Sigma_{0}\tau}{\alpha^2}$ and characteristic generator $\psi_0$. By Lemma \ref{3}, $\exp(-\frac{\tau^{\top}r_{0}}{\alpha})$ follows a log-elliptical distribution with mean value $e^{-\frac{\tau^{\top}\mu_{0}}{\alpha}}\psi_{0}(-\frac{\tau^{\top}\Sigma_{0}\tau}{2\alpha^2})$. Therefore \eqref{5} is equivalent to
\begin{equation}\label{16}
\min\limits_{\tau}\inf\limits_{\alpha\in[0,+\infty)} -\tau^{\top}\mu_{0}+\alpha\log{[\psi_{0}(-\frac{\tau^{\top}\Sigma_{0}\tau}{2\alpha^2})]}+\alpha\delta_{0}.
\end{equation} By the assumption that $\Sigma_0$ is positive definite, we have $-\frac{\tau^{\top}\Sigma_{0}\tau}{2\alpha^2}\le 0$. Then as $\inf\limits_{t\le 0}\psi_{0}(t)\ge e^{-\delta_{0}}$, we have 
$$
\begin{aligned}
&\inf\limits_{\alpha\in[0,+\infty)} -\tau^{\top}\mu_{0}+\alpha\log{[\psi_{0}(-\frac{\tau^{\top}\Sigma_{0}\tau}{2\alpha^2})]}+\alpha\delta_{0} &\\
\geqslant & \inf\limits_{\alpha\in[0,+\infty)}-\tau^{\top}\mu_{0}+\alpha\left[\log\left(\inf\limits_{t\le 0}\psi_{0}(t)\right)+\delta_{0}\right] \\
 \geqslant & \inf\limits_{\alpha\in[0,+\infty)}-\tau^{\top}\mu_{0}+\alpha\left[\log\left(e^{-\delta_0}\right)+\delta_{0}\right]=-\tau^{\top}\mu_{0}.
\end{aligned}
$$ Since $\psi_{0}$ is continuous w.r.t. $\alpha$ when $\alpha\ge 0$, the inner function of \eqref{16} is continuous w.r.t. $\alpha$. Also we have $\inf\limits_{\alpha\in[0,+\infty)} -\tau^{\top}\mu_{0}+\alpha\log{[\psi_{0}(-\frac{\tau^{\top}\Sigma_{0}\tau}{2\alpha^2})]}+\alpha\delta_{0}=\inf\limits_{\alpha\in[0,+\infty]} -\tau^{\top}\mu_{0}+\alpha\log{[\psi_{0}(-\frac{\tau^{\top}\Sigma_{0}\tau}{2\alpha^2})]}+\alpha\delta_{0}$. Therefore by Weierstrass Theorem, there exists $\alpha^{*}\in[0,+\infty]$ such that when $\alpha=\alpha^{*}$, the inner infimum term of \eqref{16} reaches its optimal value. Therefore, \eqref{16} is equivalent to
\begin{subequations}
\begin{eqnarray}
& \min\limits_{\tau,\alpha} & -\tau^{\top}\mu_{0}+\alpha\log{[\psi_{0}(-\frac{\tau^{\top}\Sigma_{0}\tau}{2\alpha^2})]}+\alpha\delta_{0},\\
& {\rm{s.t.}} & \alpha\ge 0.
\end{eqnarray}
\end{subequations} Moreover, \eqref{6} is equivalent to $\mathbb{P}_{\tilde{F}_{k}}(\frac{\tau^{\top}r_{k}-\tau^{\top}\mu_{k}}{\sqrt{\tau^{\top}\Sigma_{k}\tau}}\ge\frac{\xi_{k}-\tau^{\top}\mu_{k}}{\sqrt{\tau^{\top}\Sigma_{k}\tau}})\ge \tilde{\epsilon}_k$, $k=1,2,...,K$. Let $Z_{k}=\frac{\tau^{\top}r_{k}-\tau^{\top}\mu_{k}}{\sqrt{\tau^{\top}\Sigma_{k}\tau}}$. By Lemma \ref{2}, we know that $Z_{k}\sim E_{1}(0,1,\psi_{k})$. We denote $\Phi_{k}(z)=\mathbb{P}(Z_{k}\le z)$ as the CDF of $Z_{k}$. Then \eqref{6} is equivalent to $\frac{\xi_{k}-\tau^{\top}\mu_{k}}{\sqrt{\tau^{\top}\Sigma_{k}\tau}}\le\Phi_{k}^{-1}(1-\tilde{\epsilon}_{k})$, which can be written as $$\tau^{\top}\mu_{k}+\Phi_{k}^{-1}(1-\tilde{\epsilon}_{k})\sqrt{\tau^{\top}\Sigma_{k}\tau}\ge\xi_{k}, k=1,2,\dots,K. $$ 
\end{proof}
\qed
%\begin{equation}
%\tau^{\top}\mu_{k}+\Phi_{k}^{-1}(1-\tilde{\epsilon}_{k})\sqrt{\tau^{\top}\Sigma_{k}\tau}\ge\xi_{k}, k=1,2,\dots,K.
%\end{equation} 
%\end{proof} 

\subsection{K-L J-DRCCMDP with elliptical reference distribution}\label{se3.2}
In this section, we assume that the ambiguity sets in different rows are jointly independent. 
\begin{assumption}\label{ji9}
The joint K-L ambiguity set with jointly independent rows is $$\mathcal{F}:=\mathcal{F}_{1}\times\cdots\times\mathcal{F}_{K}=\left\{F=F_{1}\times\cdots\times F_{K}|F_{k}\in\mathcal{F}_{k}, k=1,...,K \right\},$$
%\begin{equation}
%\mathcal{F}:=\mathcal{F}_{1}\times\cdots\times\mathcal{F}_{K}=\left\{F=F_{1}\times\cdots\times F_{K}|F_{k}\in\mathcal{F}_{k}, k=1,...,K \right\},
%\end{equation} 
where $F$ is the joint distribution of $r_{1},r_{2},...,r_{K}$ with jointly independent marginals $F_{1},...,F_{K}$, and $\mathcal{F}_{k}$ is a K-L ambiguity set with reference marginal distribution $\tilde{F}_{k}$ and radius $\delta_k, k=1,...,K$. 
\end{assumption}
%By Definition \ref{ji9} and Lemma \ref{57}, we know that constraint \eqref{JF} is equivalent to 
%\begin{equation}
%\mathbb{P}_{\tilde{F}}(\tau^{\top}r_{k}\ge\xi_{k}, k=1,2,...,K)\ge \epsilon',
%\end{equation} where $\epsilon'=\inf\limits_{x\in(0,1)}\{\frac{e^{-\delta}x^{\epsilon}-1}{x-1}\}$.

\begin{theorem}\label{63}
Consider $\mathcal{F}_{0}$ defined in Assumption \ref{12} and $\mathcal{F}:=\mathcal{F}_{1}\times\cdots\times\mathcal{F}_{K}$ %as 
defined in Assumption \ref{ji9}. Assume the reference marginal distribution $\tilde{F}_{k}\sim E_{|\Lambda|}(\mu_{k}, \Sigma_{k},\psi_{k})$, $k=0,1,\dots,K$, $\Sigma_0$ is positive definite, $\psi_{0}$ is continuous and %satisfies
$\inf\limits_{t\le 0}\psi_{0}(t)\ge e^{-\delta_{0}}$. Then the (J-DRCCMDP) problem \eqref{Jobj} is equivalent to
\begin{subequations}
\begin{eqnarray}
& \min\limits_{\tau,\alpha,y} & -\tau^{\top}\mu_{0}+\alpha\log{[\psi_{0}(-\frac{\tau^{\top}\Sigma_{0}\tau}{2\alpha^2})]}+\alpha\delta_{0},\label{3999}\\
& {\rm{s.t.}} & \tau^{\top}\mu_{k}+\Phi_{k}^{-1}(1-\tilde{y}_{k})\sqrt{\tau^{\top}\Sigma_{k}\tau}\ge\xi_{k}, k=1,2,\dots,K,\\
&& 0\le y_{k}\le 1, k=1,2,\dots,K,\\
&& \prod_{k=1}^{K}y_{k}\ge\hat{\epsilon},\\
&& \alpha\ge 0,\\
&& \tau\in\Delta_{\beta,q},
\end{eqnarray}
\end{subequations} where $\tilde{y}_{k}=\inf\limits_{x\in(0,1)}\{\frac{e^{-\delta_{k}}x^{y_{k}}-1}{x-1}\}$.
\end{theorem}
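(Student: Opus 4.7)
The plan is to argue that the objective function of (J-DRCCMDP) transforms exactly as in Theorem \ref{1a}, and to reduce the joint distributionally robust chance constraint to a collection of individual ones by exploiting the product structure of $\mathcal{F}$. Since $\mathcal{F}_{0}$ is defined identically in Assumption \ref{12} and the hypotheses on $\tilde{F}_{0}$, $\Sigma_{0}$ and $\psi_{0}$ coincide with those of Theorem \ref{1a}, the objective \eqref{12a} can be rewritten as \eqref{3999} together with $\alpha\geq 0$ by exactly the same argument (apply Lemma \ref{98}, invoke Lemmas \ref{2} and \ref{3} to push $\tau$ through the elliptical distribution, and finish with Weierstrass). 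I would simply cite the objective part of Theorem \ref{1a} and focus the rest of the proof on constraint \eqref{JF}.

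For constraint \eqref{JF}, the key observation is that under Assumption \ref{ji9} the marginals $F_{1},\dots,F_{K}$ are jointly independent, so for any fixed $F=F_{1}\times\cdots\times F_{K}\in\mathcal{F}$,
$$\mathbb{P}_{F}\bigl(\tau^{\top}r_{k}\geq\xi_{k},\,k=1,\dots,K\bigr)=\prod_{k=1}^{K}\mathbb{P}_{F_{k}}\bigl(\tau^{\top}r_{k}\geq\xi_{k}\bigr).$$
Because each factor is nonnegative and the infimum over the Cartesian product $\mathcal{F}_{1}\times\cdots\times\mathcal{F}_{K}$ separates, \eqref{JF} is equivalent to
$$\prod_{k=1}^{K}\,\inf_{F_{k}\in\mathcal{F}_{k}}\mathbb{P}_{F_{k}}(\tau^{\top}r_{k}\geq\xi_{k})\;\geq\;\hat{\epsilon}.$$
I would then introduce auxiliary variables $y_{k}\in[0,1]$ as lower bounds on the individual worst-case probabilities and impose $\prod_{k}y_{k}\geq\hat{\epsilon}$. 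This is an equivalence (not a mere relaxation): if the product constraint above holds, I can choose $y_{k}$ to be the actual worst-case probability on the left; conversely, any feasible $(y_{k})$ implies the product constraint.

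With this reformulation, for each fixed $y_{k}$ the inequality $\inf_{F_{k}\in\mathcal{F}_{k}}\mathbb{P}_{F_{k}}(\tau^{\top}r_{k}\geq\xi_{k})\geq y_{k}$ is an individual KL-distributionally robust chance constraint, so Lemma \ref{57} applied with $\epsilon_{k}$ replaced by $y_{k}$ yields
$$\mathbb{P}_{\tilde{F}_{k}}(\tau^{\top}r_{k}\geq\xi_{k})\;\geq\;\tilde{y}_{k},\qquad \tilde{y}_{k}=\inf_{x\in(0,1)}\frac{e^{-\delta_{k}}x^{y_{k}}-1}{x-1}.$$
Finally, since $\tilde{F}_{k}\sim E_{|\Lambda|}(\mu_{k},\Sigma_{k},\psi_{k})$, Lemma \ref{2} gives $Z_{k}:=(\tau^{\top}r_{k}-\tau^{\top}\mu_{k})/\sqrt{\tau^{\top}\Sigma_{k}\tau}\sim E_{1}(0,1,\psi_{k})$, and standardizing the tail probability converts the constraint into $\tau^{\top}\mu_{k}+\Phi_{k}^{-1}(1-\tilde{y}_{k})\sqrt{\tau^{\top}\Sigma_{k}\tau}\geq\xi_{k}$, exactly as in Theorem \ref{1a}.

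I expect the main obstacle to be the rigorous justification of the auxiliary-variable decoupling, in particular the equivalence (rather than one-sided bound) between the product constraint $\prod_{k}\inf_{F_{k}}\mathbb{P}_{F_{k}}(\cdot)\geq\hat{\epsilon}$ and the existence of $y_{k}\in[0,1]$ satisfying both $\prod_{k}y_{k}\geq\hat{\epsilon}$ and the individual KL-robust bounds. The factorization of the inner infimum also relies on the product structure of $\mathcal{F}$, so one must be careful that the Cartesian-product ambiguity set in Assumption \ref{ji9} genuinely allows each $F_{k}$ to be optimized independently. Once this structural step is established, the remainder reduces to direct application of Lemmas \ref{57}, \ref{2} and the objective part of Theorem \ref{1a}.
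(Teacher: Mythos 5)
Your proposal is correct and follows essentially the same route as the paper: reuse the objective reformulation from Theorem \ref{1a}, factor the joint probability and the infimum over the product ambiguity set using independence, introduce auxiliary variables $y_{k}$ with $\prod_{k}y_{k}\ge\hat{\epsilon}$, and apply Lemma \ref{57} with $\epsilon_{k}$ replaced by $y_{k}$ followed by the elliptical standardization. The only difference is that you spell out the two-sided justification of the auxiliary-variable decoupling (taking $y_{k}$ equal to the actual worst-case probabilities), a step the paper states without elaboration.
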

\begin{proof}  As $\mathcal{F}_{0}$ is defined in Assumption \ref{12} and \eqref{12a} is the same to \eqref{kj}, with the same assumption of $\Sigma_{0}$ and $\psi_0$, we have \eqref{12a} is equivalent to
\begin{subequations}\label{39}
\begin{eqnarray}
& \min\limits_{\tau,\alpha} & -\tau^{\top}\mu_{0}+\alpha\log{[\psi_{0}(-\frac{\tau^{\top}\Sigma_{0}\tau}{2\alpha^2})]}+\alpha\delta_{0},\\
& {\rm{s.t.}} & \alpha\ge 0.
\end{eqnarray}
\end{subequations} As $F_{k}$ are pairwise independent, constraint \eqref{JF} is equivalent to
\begin{equation}\label{40}
\prod_{k=1}^{K}\inf\limits_{{F}_{k}\in\mathcal{F}_{k}}\mathbb{P}_{{F}_{k}}(\tau^{\top}\cdot r_k\ge \xi_{k})\ge\hat{\epsilon}.
\end{equation} By Lemma \ref{57} and  introducing auxiliary variables $y_{k}\in\mathbb{R}_{+}$, \eqref{40} is equivalent to
\begin{equation}\label{41}
\mathbb{P}_{\tilde{F}_{k}}(\tau^{\top}\cdot r_k\ge \xi_{k})\ge \tilde{y}_{k}, k=1,2,\dots,K,
\end{equation} 
\begin{equation}\label{42}
\prod_{k=1}^{K}y_{k}\ge\hat{\epsilon}, 0\le y_{k}\le 1, k=1,2,\dots,K,
\end{equation} where $\tilde{y}_{k}=\inf\limits_{x\in(0,1)}\{\frac{e^{-\delta_{k}}x^{y_{k}}-1}{x-1}\}$.
As the reference distribution $\tilde{F}_k$ %$r_k$ 
is an elliptical 
 %ly 
 distribution, %ed, 
 following the similar reformulation procedure for \eqref{6} in Theorem \ref{1a}, we have 
\begin{equation}\label{43}
\tau^{\top}\mu_{k}+\Phi_{k}^{-1}(1-\tilde{y}_{k})\sqrt{\tau^{\top}\Sigma_{k}\tau}\ge\xi_{k}, k=1,2,\dots,K,
\end{equation} where $\Phi_{k}$ is the CDF of the variable $Z_{k}\sim E_{1}(0,1,\psi_{k})$. Combining \eqref{39},\eqref{42} and \eqref{43} finishes the proof.

\qed
\end{proof}

\begin{proposition}\label{4p}
Consider $\mathcal{F}_{0}$ defined in Assumption \ref{12} and $\mathcal{F}:=\mathcal{F}_{1}\times\cdots\times\mathcal{F}_{K}$ defined in Assumption \ref{ji9}. If $\tilde{F}_{k}$ is a Gaussian distribution $N(\mu_{k},\Sigma_{k})$, $k=0,1,\dots,K$, and $\Sigma_0$ is positive definite, then \eqref{Jobj} is equivalent to  
\begin{subequations}\label{okmb}
\begin{eqnarray}
& \min\limits_{\tau,y} & -\tau^{\top}\mu_{0}+\sqrt{2\delta_{0}\tau^{\top}\Sigma_{0}\tau},\\
& {\rm{s.t.}} & \tau^{\top}\mu_{k}+\Phi_{k}^{-1}(1-\tilde{y}_{k})\sqrt{\tau^{\top}\Sigma_{k}\tau}\ge\xi_{k}, k=1,2,\dots,K,\label{89u}\\
&& 0\le y_{k}\le 1, k=1,2,\dots,K,\label{zxcc}\\
&& \prod_{k=1}^{K}y_{k}\ge\hat{\epsilon},\\
&&\tau\in\Delta_{\beta,q},\label{zxcv}
\end{eqnarray}
\end{subequations} where $\tilde{y}_{k}=\inf\limits_{x\in(0,1)}\{\frac{e^{-\delta_{k}}x^{y_{k}}-1}{x-1}\}$ and $\Phi_{k}$ is the CDF of the standard Gaussian distribution $N(0,1)$.

%if $\tilde{F}_{k}$ follows the generalized stable lows distribution for $k=0,1,...\mathbb{K}$, \eqref{Jobj} is equivalent to 
\end{proposition}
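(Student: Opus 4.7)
The plan is to derive Proposition \ref{4p} as a direct specialization of Theorem \ref{63} to the Gaussian case, carrying out the inner minimization over $\alpha$ explicitly in closed form.

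First, I would verify that all hypotheses of Theorem \ref{63} are satisfied when each $\tilde{F}_k$ is Gaussian. Recall from Table 1 that the Gaussian distribution is elliptical with characteristic generator $\psi_0(t)=e^{-t}$, which is continuous, and for $t\le 0$ we have $e^{-t}\ge 1\ge e^{-\delta_0}$ since the radius satisfies $\delta_0\ge 0$. Combined with the hypothesis that $\Sigma_0$ is positive definite, Theorem \ref{63} applies and yields an equivalent reformulation whose constraints \eqref{89u}--\eqref{zxcv} are already identical to those claimed in \eqref{okmb} (noting that $\Phi_k$ is indeed the CDF of the standard Gaussian $N(0,1)$ since $Z_k\sim E_1(0,1,\psi_k)$ with $\psi_k(t)=e^{-t}$).

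The main task is then to simplify the objective \eqref{3999}. Substituting $\psi_0(t)=e^{-t}$ gives
$$\alpha\log\!\left[\psi_0\!\left(-\tfrac{\tau^\top\Sigma_0\tau}{2\alpha^2}\right)\right]=\alpha\cdot\frac{\tau^\top\Sigma_0\tau}{2\alpha^2}=\frac{\tau^\top\Sigma_0\tau}{2\alpha},$$
so the problem reduces to minimizing $-\tau^\top\mu_0+\frac{\tau^\top\Sigma_0\tau}{2\alpha}+\alpha\delta_0$ jointly over $\tau$ and $\alpha\ge 0$. For each fixed $\tau$ (with $\tau^\top\Sigma_0\tau>0$, which holds generically since $\Sigma_0$ is positive definite), the inner problem in $\alpha$ is a smooth convex one-dimensional minimization; I would set the derivative in $\alpha$ to zero to obtain the stationary point $\alpha^\star=\sqrt{\tau^\top\Sigma_0\tau/(2\delta_0)}$, and substitution yields the optimal inner value $\sqrt{2\delta_0\,\tau^\top\Sigma_0\tau}$. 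For the degenerate case $\tau^\top\Sigma_0\tau=0$, the infimum over $\alpha\ge 0$ equals $0$ (attained as $\alpha\downarrow 0$), which coincides with $\sqrt{2\delta_0\,\tau^\top\Sigma_0\tau}=0$, so the formula remains valid and $\alpha$ can be eliminated from the problem.

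I expect the steps above to be essentially mechanical; the only mild subtlety is handling the boundary $\alpha=0$ when performing the closed-form minimization (and verifying the formula also in the $\tau^\top\Sigma_0\tau=0$ case), but this is resolved by the continuity argument already used in the proof of Theorem \ref{1a} via Weierstrass' theorem on $[0,+\infty]$. After the elimination of $\alpha$, the reformulation \eqref{okmb} follows immediately by pairing the simplified objective with the constraints inherited verbatim from Theorem \ref{63}, with $\tilde{y}_k=\inf_{x\in(0,1)}\{(e^{-\delta_k}x^{y_k}-1)/(x-1)\}$ unchanged and $\Phi_k$ identified as the standard normal CDF.
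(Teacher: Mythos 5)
Your proposal is correct and follows essentially the same route as the paper's proof: verify that the Gaussian generator $\psi_0(t)=e^{-t}$ satisfies the hypotheses of Theorem \ref{63}, substitute it into the objective \eqref{3999}, and minimize over $\alpha$ in closed form at $\alpha^{\star}=\sqrt{\tau^{\top}\Sigma_{0}\tau/(2\delta_{0})}$ to obtain $-\tau^{\top}\mu_{0}+\sqrt{2\delta_{0}\tau^{\top}\Sigma_{0}\tau}$. Your extra attention to the degenerate case $\tau^{\top}\Sigma_{0}\tau=0$ is a small refinement the paper omits (it simply assumes $\tau^{\top}\Sigma_{0}\tau>0$), but it does not change the argument.
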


\begin{proof}  If $\tilde{F}_{0}$ is a Gaussian distribution, by Table 1, $\psi_{0}(t)=e^{-t}$ and satisfies $\inf\limits_{t\le 0}e^{-t}= 1\ge e^{-\delta_{0}}$ for any radius $\delta_{0}$. Therefore we can use the conclusion of Theorem \ref{63}. The objective function in \eqref{3999} can be written as $-\tau^{\top}\mu_{0}+\frac{\tau^{\top}\Sigma_{0}\tau}{2\alpha}+\alpha\delta_{0}$, which reaches its minimum value at $\alpha=\sqrt{\frac{\tau^{\top}\Sigma_{0}\tau}{2\delta_{0}}}$ where $\Sigma_{0}$ is positive definite and $\tau^{\top}\Sigma_{0}\tau>0$. Taking the optimal value of $\alpha$, we have that the optimal value of \eqref{3999} is $-\tau^{\top}\mu_{0}+\sqrt{2\delta_{0}\tau^{\top}\Sigma_{0}\tau}$. Therefore \eqref{Jobj} is equivalent to \eqref{okmb}.
\end{proof}
\qed

Next we study the solution method of the optimization problem \eqref{okmb}. As $y_k$ and $\tau$ are both decision variables, \eqref{89u} is a nonconvex constraint and \eqref{okmb} is not convex. Moreover, $\tilde{y}_{k}$ here is a highly nonlinear function of $y_{k}$.
%there is no solver to solve the problem \eqref{okmb}. 
Thus, we propose a sequential convex approximation method to solve the nonconvex problem \eqref{okmb}. We decompose problem \eqref{okmb} into the following two subproblems where two subsets of variables are fixed alternatively. Firstly, we %fix $y=y^n$, 
compute $\tilde{y}_{k}^n
=\inf\limits_{x\in(0,1)}\{\frac{e^{-\delta_{k}}x^{y_{k}^{n}}-1}{x-1}\}$, and update $\tau$ by solving
\begin{subequations}\label{fgtt}
\begin{eqnarray}
& \min\limits_{\tau} & -\tau^{\top}\mu_{0}+\sqrt{2\delta_{0}\tau^{\top}\Sigma_{0}\tau},\\
& {\rm{s.t.}} & \tau^{\top}\mu_{k}+\Phi_{k}^{-1}(1-\tilde{y}_{k}^{n})\sqrt{\tau^{\top}\Sigma_{k}\tau}\ge\xi_{k}, k=1,2,\dots,K,\label{377}\\
&& \tau\in\Delta_{\beta,q}.
\end{eqnarray}
\end{subequations}  Then we fix $\tau=\tau^n$ and update $y$ by solving
\begin{subequations}\label{tygg}
\begin{eqnarray}
& \min\limits_{y} & \sum_{k=1}^{K}\Gamma_{k}y_{k}\\
&{\rm s.t.} & \frac{1}{2}\le \tilde{y}_{k}\le 1-\Phi(\frac{\xi_{k}-{\tau^{n}}^{\top}\mu_{k}}{\sqrt{\tau^{n}\Sigma_{k}{\tau^{n}}^{\top}}}), k=1,2,...,K,\label{vghu}\\
&&  0\le y_{k}\le 1, k=1,2,...,K,\label{366}\\
&& \sum_{k=1}^{K}\log{y_{k}}\ge\log{\hat{\epsilon}},\label{367}
\end{eqnarray}
\end{subequations} where $\Gamma_{k}$ is a given searching direction and $\tilde{y}_{k}=\inf\limits_{x\in(0,1)}\{\frac{e^{-\delta_{k}}x^{y_{k}}-1}{x-1}\}$. We denote $\tilde{y}_{k}=\chi_{k}(y_k):=\inf\limits_{x\in(0,1)}\{\frac{e^{-\delta_{k}}x^{y_{k}}-1}{x-1}\}$. By the proof of Proposition 4 in \cite{jiang2016data}, the infimum of $\chi_{k}(y_k)$ is attained in the interval $(0,1)$. For any $0\le y_k\le 1$, $\chi_{k}(y_k)>0$. By the Envelope Theorem \cite{tercca2021envelope}, $\chi_{k}(y_k)$ is strictly monotonically decreasing w.r.t. $y_k$. Thus we can reformulate \eqref{vghu} as:
\begin{equation}\label{59}
\chi_{k}^{-1}\left(1-\Phi(\frac{\xi_{k}-{\tau^{n}}^{\top}\mu_{k}}{\sqrt{\tau^{n}\Sigma_{k}{\tau^{n}}^{\top}}})\right)\le y_k\le \chi_{k}^{-1}(\frac{1}{2}),
\end{equation}where $\chi^{-1}(\cdot)$ denotes the inverse function of $\chi(\cdot)$. And from the strict monotonicity of function $\chi$, both sides of \eqref{59} are unique at the interval $(0,1)$. We then design an algorithm based on the sequential convex approximation method, and line search method for $\chi^{-1}$, see Algorithm 1.

%Using the sequential convex approximation method, we have the following algorithm to solve the problem \eqref{okmb}.

\begin{algorithm}[ht]
\SetAlgoLined
  \KwData{$\mu_k$, $\Sigma_k$, $\delta_k$, $\xi_k$, $\Delta_{\beta,q}$ $n_{max}$, $\hat{\epsilon}$, $\tilde{\epsilon}$, $\gamma$, $k=0,1,...,K$.}
  \KwResult{$\tau^{n}$, $V^{n}$.}
Set $n=0$\;
Choose an initial point $y^{0}=[y_{1}^{0},...,y_{K}^{0}]$ feasible for \eqref{366} and \eqref{367}\;
\While{$n\le n_{max}$ and $\Vert y^{n-1}-y^{n}\Vert\ge\tilde{\epsilon}$}{
Compute $\tilde{y}_{k}^n
=\inf\limits_{x\in(0,1)}\{\frac{e^{-\delta_{k}}x^{y_{k}^{n}}-1}{x-1}\}$. Solve problem \eqref{fgtt} with $\tilde{y}_{k}^n$. Let $\tau^{n}, V^{n}$ be an optimal solution and the optimal value of \eqref{fgtt} respectively. Let $\theta^{n}$ be the optimal dual multiplier vector %variable 
to constraints \eqref{377} \;
Use the line search method to find $y^{Up\cdot n}_{k}=\chi_{k}^{-1}(\frac{1}{2})$ and $y^{Low\cdot n}_{k}=\chi_{k}^{-1}(1-\Phi(\frac{\xi_{k}{\tau^{n}}^{\top}\mu_{k}}{\sqrt{\tau^{n}\Sigma_{k}{\tau^{n}}^{\top}}}))$, $k=1,...,K$\;
Solve problem \eqref{tygg} 
%with replaced
where we replace \eqref{vghu} by $y_{k}^{Low\cdot n}\le y_k\le y_{k}^{Up\cdot n}$, $k=1,...,K$, and 
set $$\Gamma_{k}=\theta^{n}_{k}\cdot(\Phi^{-1})'(1-\tilde{y}_k^n)\sqrt{{\tau^n}^{\top}\Sigma_{k}\tau^n};$$
%\begin{equation}
%\Gamma_{k}=\theta^{n}_{k}\cdot(\Phi^{-1})'(1-\tilde{y}_k^n)\sqrt{{\tau^n}^{\top}\Sigma_{k}\tau^n};
%\end{equation} 
let $\tilde{y}$ be an optimal solution of problem \eqref{tygg}\;
$y^{n+1}\leftarrow y^{n}+\gamma(\tilde{y}-y^{n}), n\leftarrow n+1$. Here, $\gamma\in (0,1)$ is the step length.
}
\caption{A hybrid algorithm to solve problem \eqref{okmb}}
\end{algorithm}

Note that in practical numerical experiments, the function $(\Phi^{-1})'$ does not have a closed-form. We apply the following approximation $$\Phi^{-1}(x)\approx t-\frac{2.515517+0.802853\times t+0.010328\times t^2}{1+1.432788\times t+0.189269\times t^2+0.001308\times t^3}, t=\sqrt{-2\log{x}},$$ which holds an error bound of $4.5\times10^{-14}$ in (\cite{abramowitz1964handbook}, Page 933) to approximate $\Phi^{-1}$ here. 

\begin{remark}
    Algorithm 1 can be seen as a particular case of the alternate convex search or block-relaxation methods
\cite{gorski2007biconvex}. From Theorem 2 in \cite{liu2016stochastic}, we know that Algorithm 1 converges to a stationary point in a finite number of iterations and the returned value $V^{n}$ is an upper bound of problem \eqref{okmb}. When these sub-problems are all convex,
the objective function is continuous, the feasible set is closed, the alternate convex search algorithm converges monotonically to a partial optimal point (Theorem 4.7 \cite{gorski2007biconvex}). Furthermore, Algorithm 1 is exactly a hybrid one which combines the line search method to handle the non-linearity of function $\tilde{y}_{k}=\chi_{k}(y_k), k=1,2,...,K$. 
\end{remark}

\subsection{K-L J-DRCCMDP with elliptical mixture distribution}\label{se3.3}

In this Section, we assume the reference distribution in the marginal K-L ambiguity set is an elliptical mixture distribution and study the reformulation of the K-L J-DRCCMDP problem \eqref{Jobj}. As for the variable vector $r_{k}$, the PDF $f_{k}$ of $r_{k}$ is defined by $f_{k}(r_{k})=\sum\limits_{j=1}^{J_{k}}\omega_{j}^{k}f_{j}^{k}(r_{k})$, where $f_{j}^{k}(r_{k})$ is the density function which follows $E_{|\Lambda|}(\mu_{j}^{k},\Sigma_{j}^{k},\psi_{j}^{k})$ and $\sum\limits_{j=1}^{J_{k}}\omega_{j}^{k}=1$.

\begin{theorem}\label{7u8}
Consider $\mathcal{F}_{0}$ defined in Assumption \ref{12} and $\mathcal{F}$ defined in Assumption \ref{ji9}. Suppose the reference distribution $\tilde{F}_{k}=\sum\limits_{j=1}^{J_{k}}\omega_{j}^{k}\tilde{F}_{j}^{k}$ is an elliptical mixture distribution with density $\tilde{f}_{k}(r_{k})=\sum\limits_{j=1}^{J_{k}}\omega_{j}^{k}\tilde{f}_{j}^{k}(r_{k})$, where $\tilde{F}_{j}^{k}(r_k)$ and $\tilde{f}_{j}^{k}(r_{k})$ are the distribution function and density function of $E_{|\Lambda|}(\mu_{j}^{k},\Sigma_{j}^{k},\psi_{j}^{k})$, $j=1,...,J_{k}$, respectively, and $\sum\limits_{j=1}^{J_{k}}\omega_{j}^{k}=1$, $k=0,1,...,K$. We further assume that $\Sigma_{j}^{0}$ is positive definite, $\mu_{j}^{0}\le 0$, $\psi_{j}^{0}$ is continuous and %satisfies 
$\inf\limits_{t\le 0}\left\{\min\limits_{1\le j\le J_0}\psi_{j}^{0}(t)\right\}\ge e^{-\delta_{0}}$, $j=1,...,J_0$. Then (K-L J-DRCCMDP) problem \eqref{Jobj} is equivalent to
\begin{subequations}
\begin{align}
 \min\limits_{\tau, \alpha, x, y, l, \hat{y}}&  \alpha\log{\left[\sum_{j=1}^{J_{0}}\omega_{j}^{0}\exp{(-\frac{\tau^{\top}\mu_{j}^{0}}{\alpha})}\psi_{j}^{0}(-\frac{\tau^{\top}\Sigma_{j}^{0}\tau}{2\alpha^2})\right]}+\alpha\delta_{0},\label{399}\\
{\rm{s.t.}} &\ \ \tau^{\top}\mu_{j}^{k}+(\Phi_{j}^{k})^{-1}(1-l_{j}^{k})\sqrt{\tau^{\top}\Sigma_{j}^{k}\tau}\ge\xi_{k},j=1,2,\dots,J_{k}, k=1,2,\dots,K, \\
& \sum_{j=1}^{J_{k}}\omega_{j}^{k}l_{j}^{k}\ge \hat{y}_{k}, k=1,2,...,K,\\
& \hat{y}_{k}\ge\frac{e^{-\delta_{k}}x_{k}^{y_{k}}-1}{x_{k}-1}, k=1,2,...,K,\\
& 0<x_{k}<1, 0\le y_{k}\le 1, 0\le \hat{y}_{k}\le 1, k=1,2,\dots,K,\\
& 0\le l_{j}^{k}\le 1, j=1,2,\dots,J_{k}, k=1,2,\dots,K,\\
& \sum_{k=1}^{K}y_{k}\ge \hat{\epsilon}, \alpha\ge 0,\\
& \tau\in\Delta_{\beta,q},
\end{align}
\end{subequations} where $\Phi_{j}^{k}$ is the cdf of the variable $Z_{j}^{k}\sim E_{1}(0,1,\psi_{j}^{k})$.
\end{theorem}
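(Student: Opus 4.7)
The plan is to follow the two--pronged strategy of Theorems \ref{1a} and \ref{63}, treating the objective \eqref{12a} and the joint chance constraint \eqref{JF} separately and then assembling the pieces. Both halves require one new ingredient beyond what appeared earlier: the expectation and the probability under the reference distribution must be expanded into their mixture components before the standard elliptical machinery is applied.

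For the objective I would first invoke Lemma \ref{98} to rewrite \eqref{12a} as $\inf_{\alpha \geq 0}\{\alpha \log \mathbb{E}_{\tilde{F}_0}[\exp(-\tau^{\top} r_0/\alpha)] + \alpha \delta_0\}$. Since $\tilde{f}_0 = \sum_j \omega_j^0 \tilde{f}_j^0$, linearity of the integral gives $\mathbb{E}_{\tilde{F}_0}[\exp(-\tau^{\top} r_0/\alpha)] = \sum_j \omega_j^0 \mathbb{E}_{\tilde{F}_j^0}[\exp(-\tau^{\top} r_0/\alpha)]$. Lemma \ref{2} applied componentwise yields that $-\tau^{\top} r_0/\alpha$ under $\tilde{F}_j^0$ is elliptical with parameters $(-\tau^{\top} \mu_j^0/\alpha,\, \tau^{\top} \Sigma_j^0 \tau/\alpha^2,\, \psi_j^0)$, and Lemma \ref{3} converts each expectation to $\exp(-\tau^{\top} \mu_j^0/\alpha)\,\psi_j^0(-\tau^{\top} \Sigma_j^0 \tau/(2\alpha^2))$. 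Substituting back produces \eqref{399}. To promote $\alpha$ to a decision variable as in Theorem \ref{1a}, I would replicate the Weierstrass argument: $\inf_{t\leq 0}\min_j \psi_j^0(t) \geq e^{-\delta_0}$ supplies a uniform lower bound on the mixture sum inside the logarithm, while $\mu_j^0 \leq 0$ keeps each exponential factor controlled as $\alpha \to 0^{+}$, so the inner infimum is attained in $[0,\infty]$ and may be lifted into the outer minimisation.

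For the constraint, Assumption \ref{ji9} factors the joint worst--case probability as $\prod_k \inf_{F_k \in \mathcal{F}_k}\mathbb{P}_{F_k}(\tau^{\top} r_k \geq \xi_k)$. Introducing slacks $y_k \in [0,1]$ with the aggregation over $k$ bounded by $\hat{\epsilon}$, I apply Lemma \ref{57} to each marginal to pass to $\mathbb{P}_{\tilde{F}_k}(\tau^{\top} r_k \geq \xi_k) \geq \tilde{y}_k$, where $\tilde{y}_k$ is still an infimum over $x \in (0,1)$. That inner infimum is linearised by introducing $\hat{y}_k$ and $x_k$ with $\hat{y}_k \geq (e^{-\delta_k} x_k^{y_k}-1)/(x_k-1)$ and treating $\hat{y}_k$ as the new threshold on the marginal probability; because the outer problem minimises and the feasible region tightens monotonically in $\hat{y}_k$, at optimum $\hat{y}_k$ can be chosen equal to $\tilde{y}_k$. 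Finally, expanding the mixture reference probability $\mathbb{P}_{\tilde{F}_k}(\tau^{\top} r_k \geq \xi_k) = \sum_j \omega_j^k \mathbb{P}_{\tilde{F}_j^k}(\tau^{\top} r_k \geq \xi_k)$, I introduce per--component confidences $l_j^k$ with $\sum_j \omega_j^k l_j^k \geq \hat{y}_k$ and recast each $\mathbb{P}_{\tilde{F}_j^k}(\tau^{\top} r_k \geq \xi_k) \geq l_j^k$ via Lemma \ref{2} as $\tau^{\top}\mu_j^k + (\Phi_j^k)^{-1}(1-l_j^k)\sqrt{\tau^{\top}\Sigma_j^k \tau} \geq \xi_k$, mirroring the reformulation of \eqref{6} in Theorem \ref{1a}.

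The step I expect to be the main obstacle is verifying that each of the lifted variables ($y_k$, $\hat{y}_k$, $l_j^k$, $x_k$) can be introduced without loss of equivalence: every introduction is manifestly a relaxation in one direction, and the reverse inclusion rests on the monotonicity of $(\Phi_j^k)^{-1}$ in $l_j^k$, of the mixture sum in the $l_j^k$, and of the function $x\mapsto(e^{-\delta_k}x^{y_k}-1)/(x-1)$ on $(0,1)$ — these monotonicities must line up so that at any optimum of the lifted problem each inequality is tight in the direction required to recover \eqref{Jobj}. A secondary hurdle is propagating the mixture version of the $\inf\psi$ hypothesis cleanly through the objective step to rule out $\alpha^{\star}=\infty$; this is where the extra assumption $\mu_j^0\leq 0$ earns its keep, because it prevents any single exponential factor $\exp(-\tau^{\top}\mu_j^0/\alpha)$ from blowing up and spoiling the uniform bound on the mixture that underlies the Weierstrass argument.
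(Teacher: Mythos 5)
Your proposal is correct and follows essentially the same route as the paper's proof: Lemma \ref{98} plus mixture expansion and Lemmas \ref{2}--\ref{3} for the objective (with the same $\mu_j^0\le 0$ and $\inf_{t\le 0}\min_j\psi_j^0(t)\ge e^{-\delta_0}$ Weierstrass argument), and for the constraint the independence factorization, Lemma \ref{57}, the componentwise decomposition $\mathbb{P}_{\tilde{F}_k}=\sum_j\omega_j^k\mathbb{P}_{\tilde{F}_j^k}$ (which the paper attributes to Proposition 2 of Hu et al.), and the same auxiliary variables $y_k,\hat{y}_k,x_k,l_j^k$. The only cosmetic difference is your monotonicity-at-optimum justification for lifting the inner infimum over $x$, where the paper instead relies on that infimum being attained in $(0,1)$; both arguments close the same gap.
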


{\it Proof}  By Lemma \ref{98}, the objective function in \eqref{12a} is equivalent to
\begin{equation}\label{400}
\inf\limits_{\alpha\in[0,+\infty)} \alpha {\rm{log}} \mathbb{E}_{\tilde{F}_{0}}\left[\exp(-\frac{\tau^{\top}r_{0}}{\alpha})\right]+\alpha \delta_{0}.
\end{equation} 
%We assume that the pdf of $r_{0}$ is defined by $f_{0}(r_{0})=\sum\limits_{j=1}^{J_{0}}\omega_{j}^{0}f_{j}^{0}(r_{0})$, where $f_{j}^{0}(r_{0})$ is the density function which follows $E_{|\Lambda|}(\tilde{\mu}_{j}^{0},\tilde{\Sigma}_{j}^{0},\tilde{\psi}_{j}^{0})$ and $\sum\limits_{j=1}^{J_{0}}\omega_{j}^{0}=1$.

As $\tilde{F}_0$ is an elliptical mixture distribution, we have that
\begin{subequations}\label{90}
\begin{align}
 \mathbb{E}_{\tilde{F}_0}\left[\exp{(-\frac{\tau^{\top}r_{0}}{\alpha})}\right] & =\int_{\Omega_0}\exp(-\frac{\tau^{\top}r_{0}}{\alpha})d\tilde{F}_{0}(r_{0})\\
& =\int_{\Omega_0}\exp(-\frac{\tau^{\top}r_{0}}{\alpha})d\left(\sum\limits_{j=1}^{J_0}\omega_{j}^{0}\tilde{F}_{j}^{0}(r_0)\right)\\
& =\sum\limits_{j=1}^{J_0}\omega_{j}^{0}\int_{\Omega_0}\exp(-\frac{\tau^{\top}r_{0}}{\alpha})\tilde{f}_{j}^{0}(r_0)dr_0\\
& =\sum_{j}\omega_{j}^{0}\exp{(-\frac{\tau^{\top}\mu_{j}^{0}}{\alpha})}\psi_{j}^{0}(-\frac{\tau^{\top}\Sigma_{j}^{0}\tau}{2\alpha^2}),
\end{align}
\end{subequations} where the last equation follows from Lemma \ref{3}. Then by the assumption in Theorem \ref{7u8}, we have
%$$
\begin{align*}
\alpha {\rm{log}} \mathbb{E}_{F_{0}}\left[\exp(-\frac{\tau^{\top}r_{0}}{\alpha})\right]+\alpha \delta_{0} & = \alpha\left[\sum_{j}\omega_{j}^{0}\exp{(-\frac{\tau^{\top}\mu_{j}^{0}}{\alpha})}\psi_{j}^{0}(-\frac{\tau^{\top}\Sigma_{j}^{0}\tau}{2\alpha^2})+\delta_0\right] \\
& \geqslant \alpha\left[\sum_{j}\omega_{j}^{0}\left\{\min\limits_{1\le j\le J_0}\psi_{j}^{0}(-\frac{\tau^{\top}\Sigma_{j}^{0}\tau}{2\alpha^2})\right\}+\delta_0\right] \\
& \geqslant \alpha\left[\sum_{j}\omega_{j}^{0}\inf\limits_{t\le 0}\left\{\min\limits_{1\le j\le J_0}\psi_{j}^{0}(t)\right\}+\delta_0\right]\ge 0.
\end{align*}
%$$ 
Here the first inequality is by the assumption $\mu_{j}^{0}\le 0$ and thus $\exp{(-\frac{\tau^{\top}\mu_{j}^{0}}{\alpha})}\ge 1$. The second inequality holds by the positive definiteness of $\Sigma_{j}^{0}$, %is positive definite 
and the fact that $\min\limits_{1\le j\le J_0}\psi_{j}^{0}(t)\ge e^{-\delta_{0}}\ge 0$, $\forall t\le 0$.
The last equality is by $\inf\limits_{t\le 0}\left\{\min\limits_{1\le j\le J_0}\psi_{j}^{0}(t)\right\}\ge e^{-\delta_{0}}$. As $\psi_{j}^{0}$ is continuous, by Weierstrass Theorem we have $$\inf\limits_{\alpha\in[0,+\infty)} \alpha {\rm{log}} \mathbb{E}_{F_{0}}\left[\exp(-\frac{\tau^{\top}r_{0}}{\alpha})\right]+\alpha \delta_{0}
%=\inf\limits_{\alpha\in[0,+\infty]} \alpha {\rm{log}} \mathbb{E}_{F_{0}}\left[\exp(-\frac{\tau^{\top}r_{0}}{\alpha})\right]+\alpha \delta_{0}
=\min\limits_{\alpha\in[0,+\infty]} \alpha {\rm{log}} \mathbb{E}_{F_{0}}\left[\exp(-\frac{\tau^{\top}r_{0}}{\alpha})\right]+\alpha \delta_{0}.$$ Taking \eqref{90} to \eqref{400}, we get the reformulation in \eqref{399}.

Same as the proof of Theorem \ref{63}, we can first reformulate \eqref{JF} by introducing auxiliary variables $y_{k}\in\mathbb{R}_{+}$ as \eqref{41} and \eqref{42}. By Proposition 2 in \cite{hu2022chance}, we have that $$\mathbb{P}_{\tilde{F}_{k}}(\tau^{\top}\cdot r_k\ge \xi_{k})\ge \tilde{y}_{k}\iff \sum_{j=1}^{J_{k}}\omega_{j}^{k}\mathbb{P}_{\tilde{F}_{j}^{k}}(\tau^{\top}r_k\ge \xi_{k})\ge \tilde{y}_{k}, k=1,2,\dots,K,$$
%\begin{equation}
%\eqref{41}\iff \sum_{j=1}^{J_{k}}\omega_{j}^{k}\mathbb{P}_{\tilde{F}_{j}^{k}}(\tau^{\top}r_k\ge \xi_{k})\ge \tilde{y}_{k}, k=1,2,\dots,K, 
%\end{equation} 
where $\tilde{y}_{k}=\inf\limits_{x\in(0,1)}\{\frac{e^{-\delta_{k}}x^{y_{k}}-1}{x-1}\}$. With Theorem \ref{63}, through adding auxiliary variables $l_{j}^{k}, \hat{y}_{k}\in\mathbb{R}_{+}$ and $x_{k}\in (0,1)$, we can reformulate \eqref{41} as
\begin{equation}\label{433}
\mathbb{P}_{\tilde{F}_{j}^{k}}(\tau^{\top}r_k\ge \xi_{k})\ge l_{j}^{k}, j=1,2,...,J_{k},  k=1,\dots,K, 
\end{equation}
\begin{equation}\label{444}
\sum_{j=1}^{J_{k}}\omega_{j}^{k}l_{j}^{k}\ge \hat{y}_{k},  
 \hat{y}_{k}\ge\frac{e^{-\delta_{k}}x_{k}^{y_{k}}-1}{x_{k}-1}, k=1,...,K,
\end{equation}
\begin{equation}\label{51}
 0<x_{k}<1, \sum_{k=1}^{K}y_{k}\ge \hat{\epsilon}, 0\le y_{k}\le 1, 0\le l_{j}^{k}\le 1, j=1,2,\dots,J_{k}, k=1,2,\dots,K.
\end{equation}
As $\tilde{F}_{j}^{k}$ is an elliptical distribution, \eqref{433} is equivalent to
\begin{equation}\label{52}
\tau^{\top}\mu_{j}^{k}+(\Phi_{j}^{k})^{-1}(1-l_{j}^{k})\sqrt{\tau^{\top}\Sigma_{j}^{k}\tau}\ge\xi_{k}, j=1,2,\dots,J_{k}, k=1,2,\dots,K,
\end{equation} where $\Phi_{j}^{k}$ is the cdf of $E_{1}(0,1,\psi_{j}^{k})$. Collecting the reformulations \eqref{400}, \eqref{90}, \eqref{444}, \eqref{51} and \eqref{52}, we finish the proof.

%By Lemma \ref{2}, as $\tilde{F}_{0}$ follows elliptical mixture distribution for $r_{0}$, $-\frac{\tau^{\top}r_{0}}{\alpha}$ still follow an elliptical mixture distribution with mean $-\sum\limits_{j=1}^{J_{0}}\omega_{j}\frac{\tau^{\top}\mu_{j}}{\alpha}$. The variance of $-\frac{\tau^{\top}r_{0}}{\alpha}$ equals
%\begin{subequations}
%\begin{eqnarray}
%& {\rm{Var}}(-\frac{\tau^{\top}r_{0}}{\alpha}) & =\mathbb{E}[(\frac{\tau^{\top}r_{0}}{\alpha})^2]-[\mathbb{E}(-\frac{\tau^{\top}r_{0}}{\alpha})]^2,\\
%&& = \sum_{j=1}^{J_{0}}\omega_{j}\left[\frac{\tau^{\top}\Sigma_{j}\tau}{\alpha^2}+(\frac{\tau^{\top}\mu_{j}}{\alpha})^{2}\right]-(\sum_{j}^{J_{0}}\omega_{j}\frac{\tau^{\top}\mu_{j}}{\alpha})^2.
%\end{eqnarray}
%\end{subequations} Let $b_{0}=\exp(-\frac{\tau^{\top}r_{0}}{\alpha})$, then $\log{b_{0}}$ follows the elliptical mixture distribution with mean $-\sum\limits_{j=1}^{J_{0}}\omega_{j}\frac{\tau^{\top}\mu_{j}}{\alpha}$ and variance $\sum_{j=1}^{J_{0}}\omega_{j}\left[\frac{\tau^{\top}\Sigma_{j}\tau}{\alpha^2}+(\frac{\tau^{\top}\mu_{j}}{\alpha})^{2}\right]-(\sum_{j}^{J_{0}}\omega_{j}\frac{\tau^{\top}\mu_{j}}{\alpha})^2$. We say $b_{0}$ follows a log-elliptical mixture distribution.

%, the mean value of $b_{0}$ is 

%and variance $\frac{\tau^{\top}\Sigma_{0}\tau}{\alpha^2}$. And by Lemma \ref{3}, $\exp(-\frac{\tau^{\top}r_{0}}{\alpha})$ follows a log-elliptical distribution with mean $e^{-\frac{\tau^{\top}\mu_{0}}{\alpha}}\psi_{0}(-\frac{\tau^{\top}\Sigma_{0}\tau}{2\alpha^2})$.

%\end{proof}
\qed

\begin{remark}
    Any mixture distribution is a weighted sum of a finite set of probability measures, and can be seen as a semiparametric approach to model the randomness \cite{mclachlan1988mixture}. As a broader class of distributions, mixture distributions has now been used in many areas such as finance, economics and engineering. In particular, the elliptical mixture distribution is an important type for research. Theorem \ref{7u8} gives an exact tractable reformulation for K-L J-DRCCMDP problems when the reference distribution belongs to the class of elliptical mixture distribution under a mild assumption. 
\end{remark}

%the derivation of $\alpha\log{\left[\sum_{j_{0}=1}^{J_{0}}\omega_{j_{0}}\exp{(\frac{\tau^{\top}\Sigma_{j_{0}}\tau}{2\alpha^2}-\frac{\tau^{\top}\mu_{j_{0}}}{\alpha})}\right]}+\alpha\delta_{0}$ w.r.t. $\alpha$ is
%$$\log{\left[\sum_{j_{0}=1}^{J_{0}}\omega_{j_{0}}\exp{(\frac{\tau^{\top}\Sigma_{j_{0}}\tau}{2\alpha^2}-\frac{\tau^{\top}\mu_{j_{0}}}{\alpha})}\right]}+\frac{\sum_{j_{0}=1}^{J_{0}}\left[\omega_{j_{0}}[\exp{(\frac{\tau^{\top}\Sigma_{j_{0}}\tau}{2\alpha^2}-\frac{\tau^{\top}\mu_{j_{0}}}{\alpha})}](\frac{\tau^{\top}\mu_{j_{0}}}{\alpha}-\frac{\tau^{\top}\Sigma_{j_{0}}\tau}{\alpha^2})\right]}{\sum_{j_{0}=1}^{J_{0}}\omega_{j_{0}}\exp{(\frac{\tau^{\top}\Sigma_{j_{0}}\tau}{2\alpha^2}-\frac{\tau^{\top}\mu_{j_{0}}}{\alpha})}}+\delta_{0}$$
\section{Numerical experiments}\label{j4}

\subsection{Machine replacement problem}\label{4.1}
We carry out the numerical tests on a machine replacement problem \cite{delage2010percentile,goyal2022robust,ramani2022robust,varagapriya2022constrained,wiesemann2013robust}. In the machine replacement problem, we consider the opportunity cost in the objective function along with two kinds of opportunity cost in the constraint. The opportunity cost denoted by $r_0$ comes from the potential production losses when the machine is under repair. The maintenance cost comes from two parts: one part is due to the operation consumption for machines, such as the required electricity fees and fuel costs when the machine is working denoted by $r_1$; the other part comes from the production of low quality products denoted by $r_2$. 
%These three costs are unknown in advance and we just know the mean values and corresponding covariance matrix for each cost. Suppose the owner possesses a fixed number of the same machines, and we assume that each machine is modeled with the same model. Therefore we only consider one machine and the same repair policy for it can be applied uniformly for all the machines.
We set the states as the using age of the machine. At each state,  there are two possible actions: $a_1$, repair and $a_2$, do not repair. The considered costs are incurred at every state. The transition probabilities are known for the whole MDP, and are the same as in \cite{varagapriya2022constrained}.

In all numerical experiments, we take the discount factor $\beta=0.9$ and assume that the initial distribution $q$ is a uniform distribution. We assume that there are 10 states. The mean values of the three costs are shown in Table 2. For example, at state 1, if the “repair” action is taken, the mean values of $r_0, r_1, r_2$ are $-10, -15, 0$, respectively; if the action “do not repair” is used, the mean values of three costs are $0, -10, -40$ respectively. The last two states are risky states such that the mean values of costs are much lower. The covariance matrices of the three costs are all assumed to be diagonal and positive definite. Concretely, for both actions, the covariance matrix of $r_0$ is $\Sigma_0=diag([0.3,0.3,0.3,0.3,0.3,0.3,0.3,0.3,0.3,0.3,0.3,0.3,0.3,0.3,0.3,3,5,2,8,9])$,\\ the covariance matrix of $r_1$ is $\Sigma_1=diag([0.5,5,0.5,0.5,0.5,5,0.5,5,0.5,0.5,0.5,\\5,0.5,0.5,0.5,0.5,8,9,8,9])$ and the the covariance matrix of $r_2$ is $\Sigma_2=diag([0.04,\\ 0.04,0.04,0.04,0.04,0.04,0.04,0.04,0.04,0.04,0.04,0.04, 0.04,0.04, 0.04,4,9,8,\\ 8.5,10])$.

In our numerical experiments, we set $\xi_1=\xi_2=-40$ and $\epsilon_1=\epsilon_2=0.8$ for I-DRCCMDP in \eqref{obj MDP}. We set $\xi_1=\xi_2=-40$ and $\hat{\epsilon}=0.8$ for J-DRCCMDP in \eqref{Jobj}. 
%Next we list our results and conclusions from numerical experiments on Moments based DRCCMDP and K-L divergence based DRCCMDP respectively.
% Please add the following required packages to your document preamble:
% \usepackage{multirow}
\begin{table}[htp]\label{yu}
\caption{\normalsize The mean values of three kinds of costs}
\centering
\begin{tabular}{c|cc|cc|cc}
\hline
\multirow{2}{*}{States} & \multicolumn{2}{c|}{Opportunity cost}     & \multicolumn{2}{c|}{Operation consumption cost}     & \multicolumn{2}{c}{Low quality cost}      \\ \cline{2-7} 
                   & \multicolumn{1}{c|}{$r_0(s,a_1)$} & $r_0(s,a_2)$ & \multicolumn{1}{c|}{$r_1(s,a_1)$} & $r_1(s,a_2)$ & \multicolumn{1}{c|}{$r_2(s,a_1)$} & $r_2(s,a_2)$ \\ \hline
1                  & \multicolumn{1}{c|}{-10} & 0 & \multicolumn{1}{c|}{-15} & -10 & \multicolumn{1}{c|}{0} & -40 \\ \hline
2                  & \multicolumn{1}{c|}{-10} & 0 & \multicolumn{1}{c|}{-15} & -30 & \multicolumn{1}{c|}{0} & -40 \\ \hline
3                  & \multicolumn{1}{c|}{-10} & 0 & \multicolumn{1}{c|}{-15} & -40 & \multicolumn{1}{c|}{0} & -50 \\ \hline
4                  & \multicolumn{1}{c|}{-10} & 0 & \multicolumn{1}{c|}{-15} & -50 & \multicolumn{1}{c|}{0} & -50 \\ \hline
5                  & \multicolumn{1}{c|}{-10} & 0 & \multicolumn{1}{c|}{-15} & -70 & \multicolumn{1}{c|}{-15} & -50 \\ \hline
6                  & \multicolumn{1}{c|}{-10} & 0 & \multicolumn{1}{c|}{-15} & -80 & \multicolumn{1}{c|}{-15} & -55 \\ \hline
7                  & \multicolumn{1}{c|}{-10} & 0 & \multicolumn{1}{c|}{-15} & -80 & \multicolumn{1}{c|}{-15} & -55 \\ \hline
8                  & \multicolumn{1}{c|}{-10} & 0 & \multicolumn{1}{c|}{-15} & -80 & \multicolumn{1}{c|}{-15} & -55 \\ \hline
9                  & \multicolumn{1}{c|}{-40} & -85 & \multicolumn{1}{c|}{-50} & -200 & \multicolumn{1}{c|}{-30} & -80 \\ \hline
10                  & \multicolumn{1}{c|}{-40} & -95 & \multicolumn{1}{c|}{-50} & -200 & \multicolumn{1}{c|}{-30} & -100 \\ \hline
\end{tabular}
\end{table}

\subsection{Numerical results on K-L divergence based DRCCMDP}
We consider the case where the reference distribution of the K-L divergence based ambiguity set is a Gaussian distribution with mean values and covariance matrix defined in Section \ref{4.1}. Following the same proof process in Proposition \ref{4p}, when the reference distribution is a Gaussian distribution, we can further reformulate the I-DRCCMDP problem from \eqref{ujn} as
\begin{subequations}\label{oguji}
\begin{eqnarray}
& \min\limits_{\tau} & -\tau^{\top}\mu_{0}+\sqrt{2\delta_{0}\tau^{\top}\Sigma_{0}\tau},\\
& {\rm{s.t.}} & \tau^{\top}\mu_{k}+\Phi_{k}^{-1}(1-\tilde{\epsilon}_{k})\sqrt{\tau^{\top}\Sigma_{k}\tau}\ge\xi_{k}, k=1,2,\dots,K,\\
&& \tau\in \Delta_{\beta,q},
\end{eqnarray}
\end{subequations} where $\tilde{\epsilon}_k=\inf\limits_{x\in(0,1)}\{\frac{e^{-\delta_{k}}x^{\epsilon_{k}}-1}{x-1}\}$. We consider six different cases when $\delta_{0}=\delta_1=\delta_2=0.5, 0.4, 0.3, 0.2, 0.1, 0.01$, respectively. We solve the convex optimization problem \eqref{oguji} using GUROBI in MATLAB, on a computer with AMD Ryzen 7 5800H CPU and 16.0 GB RAM. We show the probability of the “repair” action at each state under K-L I-DRCCMDP in Figure (a). As there are in total two actions to choose at each state, the probability of the “do not repair” action can be computed by subtracting the probability of the “repair” action with 1. From Figure (a), we can see the asymptotic convergence of the probability at each state when $\delta_0, \delta_1, \delta_2$ decrease from $0.5$ to $0.01$. Under all six radii, the probability of “repair” at last three states are all equal to $1$, which is consistent to the fact that the machine gets aging as the state goes by.
% \begin{figure}[htbp]\label{aesd}
% \centering %表示居中
% \includegraphics[width=8cm]{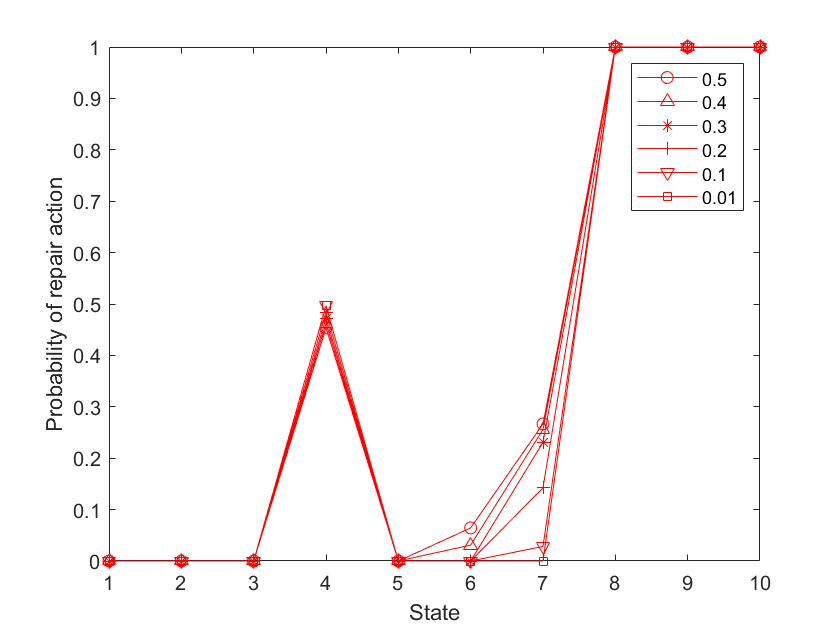}
% % [height=4.5cm]表示高度
% %[width=9.5cm]表示宽度
% %{111.eps}表示eps格式的图片，名为111
% \caption{The probability of the “repair” action at each state under K-L I-DRCCMDP}
% %图片的名称
% %\label{2}
% %图片的标签，用于文章中的引用，注意到标签的数字与实际文章显示的数字可能不同
% \end{figure}

\begin{figure} \centering
\subfigure[K-L I-DRCCMDP] { \label{aesd}    
\includegraphics[width=0.487\columnwidth]{I_KL_DRCCMDPPP.png}}     
\subfigure[K-L J-DRCCMDP] { \label{aesd2} 
\includegraphics[width=0.487\columnwidth]{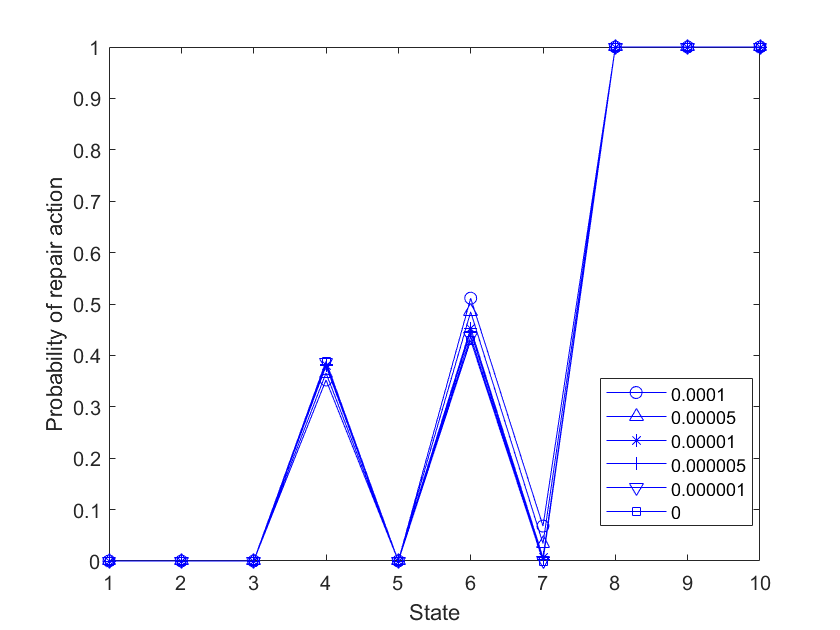}} 
\caption{The probability of the “repair” action at each state} 
\end{figure}

Next we focus on the K-L J-DRCCMDP problem which is solved by Algorithm 1. We set the initial points $y_1^0=0.95, y_2^0=0.91$ and $n_{max}=50$, $\tilde{\epsilon}=10^{-4}$, $\gamma=0.9$, the approximation accuracy of the line search method is $10^{-3}$. The other settings and parameters are the same as K-L I-DRCCMDP. We consider six cases with different radii, $\delta_0=\delta_1=\delta_2=10^{-4}, 5\times10^{-5}, 10^{-5}, 5\times10^{-6}, 10^{-6}, 0$. We use the MOSEK solver to solve the sub problem \eqref{fgtt} and \eqref{tygg}. We list the probability of “repair” at each state in Figure (b), from which we can observe the 
 convergence of the probability at each state when the radius decreases to $0$.

% \begin{figure}[htbp]\label{aesd2}
% \centering %表示居中
% \includegraphics[width=8cm]{J_KL_DRCCMDPP.png}
% % [height=4.5cm]表示高度
% %[width=9.5cm]表示宽度
% %{111.eps}表示eps格式的图片，名为111
% \caption{The probability of the “repair” action at each state under K-L J-DRCCMDP}
% %图片的名称
% %\label{2}
% %图片的标签，用于文章中的引用，注意到标签的数字与实际文章显示的数字可能不同
% \end{figure}

\section{Conclusion}
In this paper, we study the %K-L DRCCMDP 
distributionally robust chance constrained Markov decision process  
problems. We consider Kullback-Leibler distance based ambiguity sets centered at %an
%we do research on the reformulations of K-L I-DRCCMDP with 
elliptical distributions %, K-L J-DRCCMDP with elliptical distribution and K-L J-DRCCMDP with 
or elliptical mixture distributions. 
We find the deterministic formulation of the K-L I-DRCCMDP problem 
%and K-L J-DRCCMDP problem, respectively. %In particular, 
and design 
%For K-L J-DRCCMDP with elliptical distribution, we propose 
a hybrid algorithm to solve the K-L J-DRCCMDP problem. 
%nonconvex reformulation. 
%Furthermore, as for K-L I-DRCCMDP and K-L J-DRCCMDP, we conduct a numerical experiment with the machine replacement problem.
However, we only consider the randomness of the reward vector with deterministic transition probability.
It is a promising topic to consider the randomness of the transition probability in this kind of problems.
Moreover, we can study the reformulations and efficient algorithms under more ambiguity sets.

\section*{Declarations}
\begin{itemize}
\item 
{\bf Funding:} This research was supported by National Key R\&D Program of China under No. 2022YFA1004000 and National Natural Science Foundation of China under Grant Number 11991023 and 11901449.
    \item 
{\bf Competing interests:}
The authors declare they have no financial interests.
    \item 
{\bf Data Availability:} All data generated or analysed during this study are included in this published article

\end{itemize}

\bibliographystyle{plain}

\bibliography{Referencece} 

\end{document}